\chardef\bslash=`\\ 
\newtheorem{thm}{Theorem}[section]
\newtheorem{lem}[thm]{Lemma}
\newtheorem{prop}[thm]{Proposition}
\theoremstyle{definition}
\newtheorem{defn}[thm]{Definition}
\newtheorem{rem}[thm]{Remark}
\theoremstyle{remark}
\newcommand{\eval}[2][\right]{\relax
  \ifx#1\right\relax \left.\fi#2#1\rvert}
\begin{document}

\title{The rotation distance of brooms}

\author[J. Cardinal]{Jean Cardinal}
\address{Universit{\'e} libre de Bruxelles (ULB), Brussels, Belgium}
\email{jean.cardinal@ulb.be}

\author[L. Pournin]{Lionel Pournin}
\address{Universit{\'e} Paris 13, Villetaneuse, France}
\email{lionel.pournin@univ-paris13.fr}

\author[M. Valencia]{Mario Valencia-Pabon}
\address{Universit{\'e} de Lorraine, Nancy, France}
\email{mario.valencia@loria.fr}

\begin{abstract}
  The associahedron $\mathcal{A}(G)$ of a graph $G$ has the property that its vertices can be thought of as the search trees on $G$ and its edges as the rotations between two search trees. If $G$ is a simple path, then $\mathcal{A}(G)$ is the usual associahedron and the search trees on $G$ are binary search trees. Computing distances in the graph of $\mathcal{A}(G)$, or equivalently, the rotation distance between two binary search trees, is a major open problem. Here, we consider the different case when $G$ is a complete split graph. In that case, $\mathcal{A}(G)$ interpolates between the stellohedron and the permutohedron, and all the search trees on $G$ are brooms. We show that the rotation distance between any two such brooms and therefore the distance between any two vertices in the graph of the associahedron of $G$ can be computed in quasi-quadratic time in the number of vertices of $G$.
\end{abstract}
\maketitle

\section{Introduction}\label{CPV2.sec.0}

Given any graph $G$, one can build a polytope $\mathcal{A}(G)$, the \emph{graph associahedron of $G$} \cite{CarrDevadoss2006,Devadoss2009,Postnikov2009} that encodes the combinatorics of certain objects related to $G$. 
When the graph $G$ is a path on $n$ vertices, this polytope is the ubiquitous associahedron~\cite{CeballosSantosZiegler2015,Lee1989,Stasheff1963a,Stasheff1963b,Tamari1951}, whose vertices can be represented indifferently by the triangulations of a convex polygon with $n+2$ vertices or the binary trees with $n$ internal nodes. With different families of graphs, we retrieve other well-known polytopes: the associahedron of a complete graph is the permutohedron~\cite{Bowman1972,GuilbaudRosenstiehl1963}, the associahedron of a cycle is the cyclohedron~\cite{BottTaubes1994,Simion2003}, and the associahedron of a star is the stellohedron~\cite{PostnikovReinerWilliams2008}. In general, when $G$ is an arbitrary graph, the vertices of $\mathcal{A}(G)$ can be shown to correspond to the \emph{search trees} on $G$ (that are also sometimes called \emph{elimination trees}), and the edges of $\mathcal{A}(G)$ to the \emph{rotations} between search trees \cite{CardinalMerinoMutze2021,MannevillePilaud2015}.

The graphs consisting of the vertices and edges of a graph associahedron have been attracting a lot of attention due to their applications to data structures~\cite{BerendsohnKozma2022,BoseCardinalIaconoKoumoutsosLangerman2020,SleatorTarjanThurston1988}, sampling algorithms~\cite{EppsteinFrishberg2022}, and phylogenetics~\cite{CulikWood1982,SempleSteel2003}, among other subjects. The diameter and hamiltonicity properties of associahedra~\cite{LucasRoelantsvanBaronaigienRuskey1993,Pournin2014,SleatorTarjanThurston1988}, cyclohedra \cite{Pournin2017}, tree associahedra \cite{BoseCardinalIaconoKoumoutsosLangerman2020,CardinalLangermanPerez-Lantero2018}, chordal graph associahedra \cite{CardinalMerinoMutze2021,CardinalMerinoMutze2022}, caterpillar associahedra~\cite{Berendsohn2022}, and complete multipartite graph associahedra~\cite{CardinalPourninValencia2022} have been considered. General results on the diameter and hamiltonicity of all graph associahedra have also been obtained in \cite{CardinalPourninValencia2022,MannevillePilaud2015}.

Many of the above mentioned results consist in bounding the diameter of graph associahedra, or equivalently the largest possible rotation distance between any two search trees. 
In this paper, we are interested in the computational complexity of the following problem: Given a graph $G$ and two search trees on $G$, what is the rotation distance between them?

In a recent breakthrough paper~\cite{IKKKMNO23}, it is shown that the corresponding decision problem is \NP-complete, by reduction from the balanced minimum $(s,t)$\nobreakdash-cut in a graph. The question remains open in the case of the usual associahedron, or equivalently, when $G$ is restricted to be the $n$-vertex path: It is not known whether the rotation distance between two binary search trees on $n$ nodes can be computed in time polynomial in $n$, or whether the problem is \NP\nobreakdash-hard~\cite{BarilPallo2006,ClearyStJohn2010,ClearyMaio2018}. In that case, the problem can also be cast as that of computing the flip distance between two triangulations of a convex polygon. It is known to be fixed-parameter tractable: For any constant $k$, we can decide in linear time if the flip distance is at most $k$~\cite{ClearyStJohn2009,Lucas2010,KanjSedgwickXia2017,LiXia2022}. Some geometric generalizations, involving triangulations of simple polygons or point sets in the plane, are also known to be \NP-hard~\cite{AichholzerMulzerPilz2015,LubiwPathak2015,Pilz2014}, while the flip distance can be computed in polynomial time for certain families of point sets~\cite{Eppstein2010}.

When $G$ is a complete graph, hence when $\mathcal{A}(G)$ is a permutohedron, the rotation distance between two search trees is simply the number of inversions between the two corresponding permutations, a quantity that can be computed using classical sorting algorithms.

Our main result is that for a family of graph associahedra that interpolates between the permutohedron (when $G$ is a complete graph) and the stellohedron (when $G$ is a star), the problem of determining the distance between two vertices, hence the rotation distance between two search trees on $G$, is solvable in polynomial time. To our knowledge, this is the first non-trivial example of a family of generalized permutohedra~\cite{Postnikov2009}, besides standard permutohedra, for which geodesics (shortest paths in their $1$-skeletons) can be computed in polynomial time in the dimension of the polytope. In particular, our algorithm solves the problem for stellohedra, a question which was also open\footnote{The question was discussed for instance at the Dagstuhl Seminar 22062 in February 2022 (\url{https://www.dagstuhl.de/22062}).}. The family of graph associahedra we consider are the ones whose underlying graph is $G$ is a complete split graph: $G$ is obtained from the complete graph by selecting a non-empty proper subset $Q$ of its vertices and by removing all the edges between two vertices from $Q$. If $Q$ is a singleton, then $G$ is complete and $\mathcal{A}(G)$ is the permutohedron. On the other hand, if $Q$ contains all the vertices of $G$ but one, then $G$ is a star and $\mathcal{A}(G)$ is the stellohedron. It turns out that when $G$ is a complete split graph, the search trees on $G$ are \emph{brooms}: they are formed from a simple path, one end of which is the root while the other end is attached to the leaves \cite{CardinalPourninValencia2022}. Therefore, our main result can be rephrased as follows.

\begin{thm}\label{CPV2.sec.0.thm.1}
The rotation distance between two brooms on a complete split graph with $n$ vertices can be computed in time $O(n^{2+o(1)})$.
\end{thm}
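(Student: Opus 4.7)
My plan is to study the combinatorial structure of brooms on complete split graphs, derive a pairwise-inversion-type lower bound on the rotation distance, and match it with an explicit greedy algorithm whose running time is quasi-quadratic. I would first fix a representation: with $K$ the clique part and $Q$ the independent part of the complete split graph $G$, every broom $T$ can be described by a pair $(\sigma_T, L_T)$, where $L_T \subseteq Q$ is the set of leaves at the bottom of the broom (collapsed to $\emptyset$ in the degenerate single-leaf case) and $\sigma_T$ is a linear ordering of the remaining chain vertices $V \setminus L_T$. A direct enumeration of cases, using the fact that all of $K$ must lie on the chain and that at most one $Q$-vertex can follow the last $K$-vertex in $\sigma_T$, shows that the rotations on $T$ take exactly three forms: an adjacent swap inside $\sigma_T$, a \emph{promotion} of a leaf $q \in L_T$ into the penultimate chain position, or a \emph{demotion} of a $Q$-vertex at the chain tail into the leaf set of its neighbor.

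Next I would introduce a nonnegative integer invariant $\Phi(T, T')$ and prove $d(T, T') \geq \Phi(T, T')$. The invariant should measure, for each ordered pair of vertices, whether their relative configuration in $T$ and $T'$ agrees; naively summing an inversion count on the common chain and the symmetric difference $|L_T \triangle L_{T'}|$ fails because a single promotion simultaneously flips the pair-order of the promoted leaf with every other leaf in the current broom. The correct weighting must combine local inversions with a more global term that accounts for the cascading effect of promotions and demotions, calibrated so that every elementary rotation reduces $\Phi$ by exactly one.

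For the matching upper bound I would describe a greedy procedure that processes the target chain $\sigma_{T'}$ from root downwards. At step $i$, the vertex $x$ destined for position $i$ of $T'$ is identified in the current broom: if $x$ already lies on the chain, a sequence of adjacent swaps brings it to the correct position; if $x$ is currently a leaf, a single promotion (possibly preceded by demotions of tail vertices that stand in the way) moves $x$ into place. Each executed rotation decrements $\Phi$ by exactly one, so the algorithm produces a shortest path of length $\Phi(T, T')$. The running time is analyzed by maintaining an order-maintenance structure keyed on the ranks in $\sigma_{T'}$, which locates each next move in $n^{o(1)}$ amortized time; with $O(n^2)$ rotations in all, this yields the $O(n^{2+o(1)})$ bound.

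The main obstacle I expect is identifying the correct invariant $\Phi$. The three rotation types interact with both the chain order and the leaf set, and promotions and demotions act on many pair-orderings at once. Arguing that $\Phi$ is exactly tight --- that it decreases by exactly one per rotation \emph{and} is realized by the greedy schedule without any wasted moves --- will require a careful structural analysis of how the local moves propagate through both components of $(\sigma_T, L_T)$, and in particular a characterisation of which leaves must migrate in which order for the schedule to avoid pair-mismatches that subsequent rotations would have to repair.
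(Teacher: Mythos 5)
Your proposal has a genuine gap, and it is precisely the obstacle you flag at the end. You are hoping for a single combinatorial invariant $\Phi(T,T')$, built from pairwise agreement/disagreement data, that lower-bounds the distance and is exactly realized by a greedy schedule. But the rotation distance between two brooms is not given by any such tight pairwise potential. The real difficulty is that for each vertex $u$ of $Q$ lying on both handles, one must \emph{decide} whether $u$ should take a detour through the leaves along the geodesic, and this decision is a global optimization, not a greedy one. The paper makes this explicit: the distance equals $\min_{x\in\{0,1\}^{|Y|}} f(x)$, where $Y$ is the set of common handle vertices in $Q$, $x_u$ records whether $u$ is demoted at some point, and $f$ is an explicit quadratic form (with a constant term involving inversions of the $P$-order, linear terms of mixed sign, and non-positive quadratic coefficients). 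Each fixed $x$ yields a path of length $f(x)$, and every path is lower-bounded by $f(x)$ for its own $x$; so the optimization over $x$ cannot be avoided.

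Concretely, your greedy "process the target chain top-down, promote or swap the next vertex into place" will fail because the decision to demote a given $u$ is not locally determined. The stellohedron case in the paper gives a clean counterexample to any such local rule: when both brooms have all of $Q$ in their handles in opposite orders, the optimal choice is $x\equiv 0$ (no demotions, stay inside the permutohedral facet) for $q\le 5$, but $x\equiv 1$ (demote \emph{every} vertex of $Q$) once $q>5$. This is a global phase transition; there is no per-pair potential whose greedy decrease captures it. Moreover, if a pairwise $\Phi$ existed that decreased by exactly one along every rotation on every geodesic and was computable by a closed formula, computing distances would be immediate, whereas the paper's argument requires a genuine reduction to quadratic $0/1$ minimization and then to a minimum $(s,t)$-cut (exploiting non-positivity of the cross terms) in order to reach polynomial time, and a near-linear max-flow algorithm to reach $O(n^{2+o(1)})$. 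Your description of the broom representation $(\sigma_T,L_T)$ and the three rotation types is correct and matches the paper; it is the step from "lower bound" to "tight, greedily achievable lower bound" that does not go through.
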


When $G$ is a star and $\mathcal{A}(G)$ is the stellohedron, the problem has an interesting reformulation in terms of \emph{partial permutations}, defined as ordered subsets of $\{1,\ldots,n\}$. One can equip partial permutations with three types of elementary operations. The first operation consists in exchanging two consecutive elements of the partial permutation (a transposition of adjacent elements in the terminology of permutations). The second operation consists in adding at the end of the partial permutation an element that it does not contain yet. The third operation is the inverse of the second one and amounts to removing the last element of the partial permutation. It follows from Theorem~\ref{CPV2.sec.0.thm.1} that the number of these operations required to transform a partial permutation of $\{1,\ldots,n\}$ into another can be determined in polynomial time in $n$. 

As observed by Santos (see for instance Section 5.4 in~\cite{CP16}), the associahedron of the star with $n$ leaves is also known to be combinatorially equivalent to the secondary polytope of a point set in dimension $n-1$ formed by two dilated copies of the standard $(n-1)$-dimensional simplex. This point set has been referred to as the ''mother of all examples'' for its role in the theory of regular triangulations~\cite{LRS10}. Therefore, another consequence of Theorem~\ref{CPV2.sec.0.thm.1} is that the flip distance between regular triangulations of this family of point sets can be computed in polynomial time.  

Our strategy for proving Theorem \ref{CPV2.sec.0.thm.1} consists of modeling the rotation distance between two brooms on a complete split graph $G$ as a quadratic function of $0/1$ variables, and then minimizing this function. While minimizing a quadratic function of $0/1$ variables is \NP-hard in general, we shall see that our model is among the ones for which the problem is known to be polynomial. 

The remainder of the paper is organized as follows. In Section \ref{CPV2.sec.0.5}, we recall how the boundary complex of the graph associahedron $\mathcal{A}(G)$ is described is terms of the tubings on the underlying graph $G$ as well as the precise correspondence between the vertices and edges of $\mathcal{A}(G)$ and the search trees on $G$ and their rotations. In Section~\ref{CPV2.sec.1}, we describe our quadratic model for the rotation distance between brooms. In Section~\ref{CPV2.sec.2}, we analyze this model and its solutions. We also show in that section that the rotation distance of brooms on a complete split graph can be computed in polynomial time. Finally, we show that this complexity is in fact at worst quasi-quadratic in Section~\ref{CPV2.sec.3}. Interestingly, this is proved by reduction to a minimum $(s,t)$-cut problem.

\section{Graph associahedra, search trees, and brooms}\label{CPV2.sec.0.5}

In this section we consider a connected simple graph $G$ and we describe the face complex of $\mathcal{A}(G)$, the graph associahedron of $G$. We first give a description of that face complex in terms of tubings of $G$ \cite{AguiarArdila2017,CarrDevadoss2006,MannevillePilaud2015}, and then proceed with a description of the $1$-skeleton of $G$ in terms of search trees \cite{CardinalMerinoMutze2021}. We recall that a subset of a set $S$ is called \emph{proper} when it is distinct from $S$.

\begin{defn}
A \emph{tubing} on $G$ is a collection $\mathcal{T}$ of non-empty proper subsets of vertices of $G$, each of whose is referred to as a \emph{tube}, such that
\begin{enumerate}
\item[(i)] for every $S$ in $\mathcal{T}$, the subgraph induced by $S$ in $G$ is connected and
\item[(ii)] for every two distinct tubes $S_1$ and $S_2$ in $\mathcal{T}$, either
\begin{enumerate}
\item[(ii.a)] $S_1$ is a subset of $S_2$ or $S_2$ a subset of $S_1$, in which case we say that $S_1$ and $S_2$ are \emph{nested}, or
\item[(ii.b)] the subgraph of $G$ induced by $S_1\cup S_2$ is not connected, in which case we say that $S_1$ and $S_2$ are \emph{non-adjacent}.
\end{enumerate}
\end{enumerate}
\end{defn}

Four tubings $\mathcal{T}_1$ to $\mathcal{T}_4$ on a claw $G$ (a star with three leaves) with leaves labeled by $1$, $2$, and $3$ are shown in Figure \ref{CPV2.sec.0.5.fig.0}. The tubes are depicted in this figure as curves around the vertices they contain (whose labels, that are omitted, can be recovered from the sketch of $G$ on the left of the figure). For instance,
$$
\mathcal{T}_1=\Bigl\{\{z\},\{3,z\},\{2, 3,z\}\Bigr\}\mbox{,}
$$
where $z$ denotes the center of the star and
$$
\mathcal{T}_2=\Bigl\{\{3\},\{3,z\},\{2, 3,z\}\Bigr\}\mbox{.}
$$

The graph associahedron $\mathcal{A}(G)$ is then defined from the tubings of $G$: $\mathcal{A}(G)$ is any polytope whose face lattice is isomorphic to the reverse inclusion order of the tubings of $G$~\cite{AguiarArdila2017,CarrDevadoss2006,MannevillePilaud2015}. It can be realized geometrically in different ways, for instance as a Minkowski sum of standard simplices~\cite{Devadoss2009,Postnikov2009}.

Now denote by $n$ the number of vertices of $G$. By definition, the vertices of $\mathcal{A}(G)$ are in one-to-one correspondence with inclusion-wise maximal tubings of $G$ or, equivalently, the tubings of $G$ of size exactly $n-1$. For instance, the four tubings on the claw shown in Figure \ref{CPV2.sec.0.5.fig.0} correspond to vertices as they each contain three tubes.
In fact, more generally, the $k$-dimensional faces of $\mathcal{A}(G)$ are in one-to-one correspondence with the tubings of $G$ of size $n-k-1$ and $\mathcal{A}(G)$ itself has dimension $n-1$. Thus, the facets of $\mathcal{A}(G)$ are in one-to-one correspondance with the connected induced subgraphs of $G$ other that $G$ itself. It can be shown that any tubing of size $n-2$ can be completed into a maximal tubing in exactly two distinct ways~\cite{MannevillePilaud2015}. We can therefore interpret the 1\nobreakdash-skeleton of the graph associahedron as a flip graph on maximal tubings, where a flip consists of replacing one tube within a tubing of size $n-1$ by the unique other tube such that the resulting set is still a tubing of $G$. In Figure~\ref{CPV2.sec.0.5.fig.0}, each of the represented tubings is related to the next by a flip. For example, $\mathcal{T}_1$ can be transformed into $\mathcal{T}_2$ by replacing $\{z\}$ with $\{3\}$. Likewise, $\mathcal{T}_3$ can be transformed into $\mathcal{T}_4$ by replacing $\{z,2,3\}$ with $\{1\}$.
\begin{figure}
\begin{centering}
\includegraphics[scale=1]{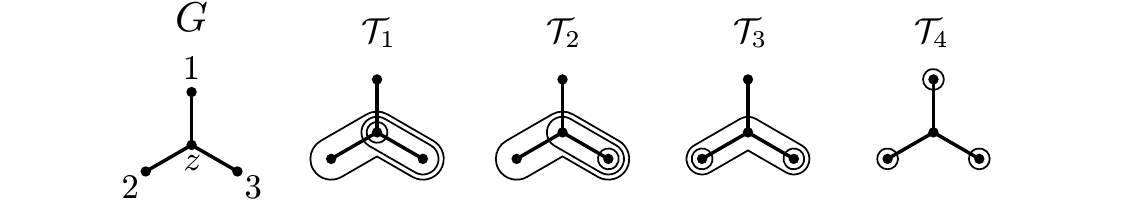}
\caption{A claw $G$ (left) and four tubings of $G$.}\label{CPV2.sec.0.5.fig.0}
\end{centering}
\end{figure}

A simpler interpretation of maximal tubings, and of the vertices of the graph associahedron, can be given in terms of search trees.

\begin{defn}
A \emph{search tree} $T$ on $G$ is any rooted tree that shares its vertices with $G$ and can be obtained from the following recursive procedure.
\begin{enumerate}
\item[(i)] If $G$ has only one vertex, then $T$ consists of that single vertex.
\item[(ii)] Otherwise, pick any vertex $r$ of $G$ as the root of the search tree. The search tree $T$ consists of that root attached to an (unordered) collection of subtrees defined recursively as search trees on each of the connected components of the graph $G-r$.
\end{enumerate}
\end{defn}

We can associate a maximal tubing $\mathcal{T}$ of $G$ to any search tree $T$ on $G$ by taking, for the tubes in $\mathcal{T}$, the vertex sets of all the subtree of $T$ (except for $T$ itself). It is an easy exercise to check that the inclusion-wise maximal tubings of $G$ are in one-to-one correspondence with search trees on $G$. For example, the search trees on the claw that correspond to the four tubings from Figure \ref{CPV2.sec.0.5.fig.0} are shown on the right of Figure \ref{CPV2.sec.0.5.fig.1}.

Furthermore, the flip operations on maximal tubings can be interpreted as \emph{rotations} on search trees. A rotation is the local change observed on a search tree by reversing the removal order between a vertex and one of its children in the tree~\cite{CardinalMerinoMutze2022}.

\begin{defn}
  Given a search tree $T$ on $G$ and two vertices $u,v$ such that $u$ is the parent of $v$ in $T$, the \emph{rotation} around the edge $u,v$ of $T$ is the operation that transforms $T$ into another search tree in which:
  \begin{itemize}
  \item $v$ becomes the parent of $u$, and the parent of $u$ in $T$, if any, becomes the parent of $v$,
  \item a subtree $S$ of $v$ in $T$ is reattached to $u$ if its vertices belong to the same connected component of $G_u-v$ as $u$, where $G_u$ is the subgraph induced by the subtree of $T$ rooted at $u$,
  \end{itemize}
  and all other edges of $T$ are preserved.
\end{defn}

Two search trees on $G$ are adjacent in the $1$\nobreakdash-skeleton of $\mathcal{A}(G)$ if and only if they differ by a rotation. Search trees are also sometimes referred to as \emph{elimination trees}~\cite{CardinalMerinoMutze2021}, and more generally as $\mathcal{B}$-trees, where $\mathcal{B}$ is the graphical building set of $G$~\cite{PostnikovReinerWilliams2008}.

We can define a rotation distance problem between two search trees $T_1$ and $T_2$ on $G$ as the minimum number of rotations needed to transform $T_1$ into $T_2$.
\begin{figure}
\begin{centering}
\includegraphics[scale=1]{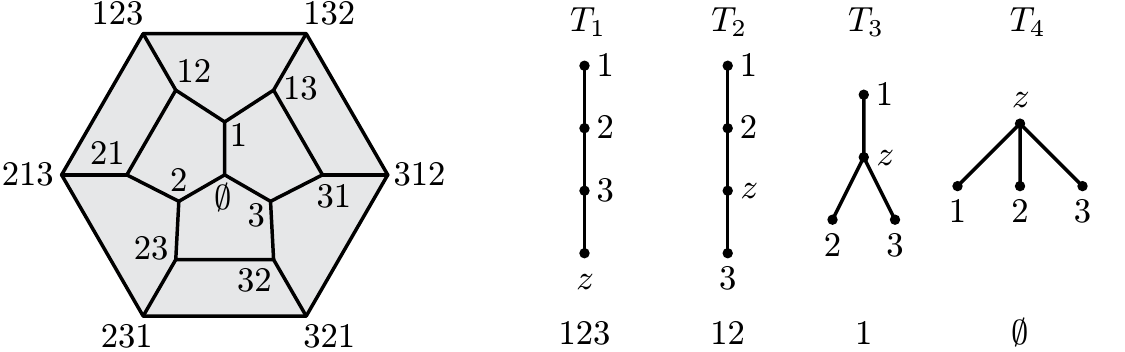}
\caption{The stellohedron $\mathcal{A}(G)$ when $G$ is the claw (left), whose vertices are labeled using the partial permutations of $\{1,2,3\}$, and four brooms on $G$ (right), each one above the partial permutation it corresponds to.}\label{CPV2.sec.0.5.fig.1}
\end{centering}
\end{figure}
This generalizes the usual rotation distance on binary search trees, which are precisely the search trees in the case when $G$ is a simple path on $n$ vertices. In that special case, the graph associahedron of $G$ is the $n$-dimensional associahedron. When however, $G$ is the complete graph on $n$ vertices, then its graph associahedron is the $(n-1)$-dimensional permutohedron, the search trees on $G$ are one-to-one with permutations in $S_n$, and the rotation distance is simply the number of inversions between two permutations.

Here, we are interested in a different special case of graph associahedra: the \emph{split graph} associahedra. We recall the following standard definition.

\begin{defn}
    A graph $G=(V,E)$ is a \emph{split graph} if $V$ can be partitioned into two subsets $P$ and $Q$ inducing respectively a clique and an independent set in $G$.
    A split graph is \emph{complete} if it is maximal for this property, hence if every vertex of $P$ is adjacent with every vertex of $Q$.
\end{defn}

Throughout the paper, $G$ will be a fixed complete split graph, formed from two sets $P$ and $Q$ of vertices. We will denote by $p$ the number of vertices in $P$ and by $q$ the number of vertices in $Q$.
We introduce the following definition.
\begin{defn}
    A \emph{broom} $T$ is a rooted tree composed of a simple path which we shall refer to as the \emph{handle} of the broom, at one end of which all the leaves are attached. 
    The other end of the handle is the root of $T$. 
\end{defn}

A search tree on the complete split graph $G$ is always a broom~\cite{CardinalPourninValencia2022}:
The set of vertices of $G$ in the handle of $T$ is $P\cup{S}$ where $S$ is a subset of $Q$ and the leaves of $T$ are the vertices of $G$ contained in $Q\mathord{\setminus}S$. Note that by convention, in the case when $S$ is equal to $Q$, we consider that $T$ has no leaf. The vertex of the handle other than the root of $T$ (to which the leaves, if any, are attached) always belongs to $P$. Hereafter we will think of a broom as ordered from the root at the top to the leaves at the bottom and will say a vertex $u$ is \emph{above} a vertex $v$ when the distance to the root is smaller for $u$ than for $v$. The four trees $T_1$ to $T_4$ shown in Figure \ref{CPV2.sec.0.5.fig.1}, for instance, are brooms because they are search trees on a star. The search tree $T_1$ has no leaf and the vertex labeled $1$ in its handle is above the vertex labeled $3$. Moreover, $T_2$, $T_3$, and $T_4$ have $1$, $2$, and $3$ leaves, respectively.


Since the search trees on $G$ are brooms we will speak of brooms \emph{on} $G$. As mentioned above, the vertices of $\mathcal{A}(G)$ corresponds to these brooms and its edges to the rotations between two brooms. There are three types of rotations between brooms \cite{CardinalPourninValencia2022}. 
\begin{enumerate}
    \item The first type of rotations consists in exchanging two consecutive vertices $u$ and $v$ within the handle of the broom. If $v$ is the vertex attached to the leaves (which belongs to $P$), then this type of rotation requires that $u$ also belongs to $P$. 
    \item In the second type of rotation, $v$ is the vertex attached to the leaves and the vertex $u$ just above it belongs to $Q$. Then $u$ is removed from the handle, and inserted as a leaf, while $v$ is reattached to the vertex that was above $u$. In this case, the length of the handle decreases by $1$ and the number of leaves increases by $1$. 
    \item The third type of rotations is the inverse of the second type: a leaf $u$ of $T$ is removed and inserted in the handle of the broom between the vertex $v$ to which the leaves are attached and the vertex just above $v$. 
\end{enumerate}

These three types of rotations exactly describe the edges of $\mathcal{A}(G)$ in terms of brooms \cite{CardinalPourninValencia2022}.

\begin{rem}
\label{rem:partial}
In the introduction, we mentioned that, when $G$ is a star (that is, when $p$ is equal to $1$) whose leaves are labeled from $1$ to $n$, then the brooms on $G$ and their rotations can be replaced by the partial permutations of $\{1,\ldots,n\}$ where $n$ stands for the number of vertices of $G$ and three types of elementary operations. In fact, the partial permutation corresponding to a broom $T$ can be recovered by reading the elements of $Q$ contained in the handle of $T$ starting from the top. Then, the transpositions in the partial permutations correspond to exchanging two vertices in the handle of $T$. Likewise, the addition at the end of the partial permutation of an element of $\{1,\ldots,n\}$ correspond to the rotation that moves a leaf of $T$ into its handle, and the removal of the last element of the partial permutation to the inverse rotation. When $G$ is the claw shown in Figure \ref{CPV2.sec.0.5.fig.0}, the stellohedron $\mathcal{A}(G)$ is depicted in Figure~\ref{CPV2.sec.0.5.fig.1} where its vertices are labeled using the partial permutations of $\{1,2,3\}$ (and the empty partial permutation is denoted by $\emptyset$).
\end{rem}

Let us conclude this section by observing that the face complex of $\mathcal{A}(G)$ contains a number of copies of the permutohedra of each dimension between $p-1$ and $p+q-2$. In particular, the subgraph induced in the $1$-skeleton of $\mathcal{A}(G)$ by the brooms whose handle contains a given subset $S$ of $k$ vertices of $Q$, and whose bottommost vertex is a given element $v$ of $P$ is a permutohedron of dimension $p+k-2$. Its edges correspond to all the possible transpositions of consecutive vertices that do not involve $v$ along the handles of these brooms or, in the terminology of permutations, to all the transpositions of consecutive entries in a permutation of $(P\mathord{\setminus}\{v\})\cup{S}$. For instance, when $G$ is the claw from Figure \ref{CPV2.sec.0.5.fig.0} and $S=\{1,2,3\}$, one recovers the $2$-dimensional permutohedron shown as the outer hexagon on the left of Figure \ref{CPV2.sec.0.5.fig.1}. Note that, in this case, $v$ is necessarily the center of the star as $P=\{v\}$. Similarly, when $S=\{1,2\}$, we get the $1$-dimensional permutohedron corresponding to the edge between the vertices $12$ and $21$ on the left of Figure \ref{CPV2.sec.0.5.fig.1}.

We therefore immediately obtain the following statement.
Recall that $G$ is a complete split graph with a vertex bipartition $P\cup Q$ such that $|P|=p$ and $|Q|=q$. 
\begin{prop}\label{CPV2.sec.0.5.prop.1}
Consider an integer $k$ such that $1\leq{k}\leq{q}$. 
The face complex of the graph associahedron $\mathcal{A}(G)$ contains at least
$p{q\choose{k}}$
different copies of the $(p+k-2)$-dimensional permutohedron.
\end{prop}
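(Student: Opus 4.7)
The plan is to exhibit, for each pair $(S,v)$ with $S\subseteq Q$, $|S|=k$, and $v\in P$, an explicit tubing $\mathcal{T}_{S,v}$ on $G$ whose corresponding face of $\mathcal{A}(G)$ is the $(p+k-2)$-dimensional permutohedron described in the paragraph preceding the statement. Since there are $p\binom{q}{k}$ such pairs, the proposition will follow provided that distinct pairs give rise to distinct faces.

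First, I would define $\mathcal{T}_{S,v}$ to consist of the singletons $\{\ell\}$ for each $\ell\in Q\mathord{\setminus}S$ together with the tube $\{v\}\cup(Q\mathord{\setminus}S)$. Checking this is a valid tubing is immediate: each singleton is trivially connected, any two distinct singletons are non-adjacent in $G$ (since vertices of $Q$ form an independent set), and $\{v\}\cup(Q\mathord{\setminus}S)$ induces a connected subgraph because $v\in P$ is adjacent to every vertex of $Q$ in the complete split graph $G$; finally, every singleton in $Q\mathord{\setminus}S$ is nested inside this last tube.

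Next, I would compute the dimension of the face of $\mathcal{A}(G)$ associated with $\mathcal{T}_{S,v}$. Since $\mathcal{T}_{S,v}$ has exactly $(q-k)+1=q-k+1$ tubes, the corresponding face has dimension $(n-1)-(q-k+1)=p+k-2$. Its vertices are, by definition, the maximal tubings that contain $\mathcal{T}_{S,v}$. Translating through the correspondence between maximal tubings and search trees, the extensions of $\mathcal{T}_{S,v}$ are exactly the brooms on $G$ whose handle is $P\cup S$ and whose bottommost handle vertex is $v$. By the observation made in the paragraph preceding the statement, these brooms, together with the rotations between them, form a $(p+k-2)$-dimensional permutohedron, so the face associated with $\mathcal{T}_{S,v}$ is such a permutohedron.

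Finally, I would argue that distinct pairs $(S,v)$ yield distinct tubings, hence distinct faces. Indeed, reading off the singleton tubes of $\mathcal{T}_{S,v}$ recovers $Q\mathord{\setminus}S$ and therefore $S$, while the unique non-singleton tube then recovers $v$. Since faces of $\mathcal{A}(G)$ are in one-to-one correspondence with tubings of $G$, the $p\binom{q}{k}$ tubings $\mathcal{T}_{S,v}$ produce $p\binom{q}{k}$ distinct permutohedral faces. I do not anticipate a substantial obstacle here: the whole argument is a matter of making the face associated with each copy of the permutohedron explicit and invoking the bijection between tubings and faces that was recalled at the beginning of the section.
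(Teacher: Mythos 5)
Your proposal is correct and takes essentially the same approach as the paper: both identify, for each $k$-subset $S$ of $Q$ and each $v\in P$, the face of $\mathcal{A}(G)$ whose vertices are the brooms with handle $P\cup S$ and bottommost handle vertex $v$, then count the $p\binom{q}{k}$ pairs. The paper leaves this as an immediate consequence of the observation preceding the statement, while you usefully make the face explicit by writing down the tubing $\mathcal{T}_{S,v}$ and checking its size and validity (the only cosmetic slip is that when $k=q$ the tube $\{v\}\cup(Q\setminus S)=\{v\}$ is itself a singleton, so "the unique non-singleton tube" should instead read "the unique tube meeting $P$").
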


In particular, Proposition \ref{CPV2.sec.0.5.prop.1} singles out the $p$ permutohedral facets of $\mathcal{A}(G)$ obtained by taking the whole of $Q$ for $S$. Each of these permutohedral facets corresponds to a choice for the vertex $v$ within $P$.

\section{A quadratic model for rotation distance}\label{CPV2.sec.1}

Throughout the section and the next two, $T_1$ and $T_2$ are two fixed brooms on the complete split graph $G$. We aim at establishing an expression for their rotation distance or, equivalently for the distance of the vertices they correspond to in the graph of $\mathcal{A}(G)$. We denote by $Y$ the set of the vertices of $Q$ that belong to the handle of both $T_1$ and $T_2$. We also denote by $\sigma$ the permutation of $\{1,\ldots,p\}$ obtained by relabeling the vertices in $P$ from $1$ to $p$ according to their order from top to bottom within the handle of $T_1$ and then reading these labels again from top to bottom within the handle of $T_2$. The number of inversions $\mathrm{inv}(\sigma)$ of $\sigma$ will appear in the expression of the rotation distance between $T_1$ and $T_2$. Indeed, if $q$ is equal to $0$, then $\mathcal{A}(G)$ is the permutohedron and the distance between $T_1$ and $T_2$ is precisely equal to $\mathrm{inv}(\sigma)$.

Consider a vertex $u$ in $Q\mathord{\setminus}Y$. According to the definition of $Y$, the handles of $T_1$ and $T_2$ cannot both contain $u$. If $u$ belongs to one of these handles, we denote by $A_u$ the set of the vertices in $P$ that lie below $u$ in that handle. If however, $u$ is a leaf in both brooms, then $A_u$ is the empty set.

Now consider a vertex $u$ of $Y$. We distinguish two subsets of $P$:

\begin{itemize}
\item[(i)] the set $B_u$ of all vertices in $P$ that are above $u$ in one of the two brooms $T_1$ or $T_2$ and below $u$ in the other and
\item[(ii)] the set $C_u$ of all vertices in $P$ that are below $u$ in both of the brooms $T_1$ and $T_2$.
\end{itemize}

We also distinguish three subsets of $Q$ (still for a vertex $u$ in $Y$):

\begin{itemize}
\item[(iii)] the set $D_u$ of all vertices in $Q\mathord{\setminus}Y$ that are above $u$ in one of the two brooms $T_1$ or $T_2$,
\item[(iv)] the set $E_u$ of all vertices in $Y$ that are above $u$ in one of the two brooms $T_1$ or $T_2$ and below $u$ in the other, and
\item[(v)] the set $F_u$ of all vertices in $Y$ that are below $u$ in both of the brooms $T_1$ and $T_2$.
\end{itemize}

Using this notations, we can prove the following lower bound.

\begin{lem}\label{CPV2.sec.1.lem.1}
Consider a sequence of rotations that transform $T_1$ into $T_2$. The number of rotations in that sequence is at least
\begin{equation}\label{CPV2.sec.1.lem.1.eq.1}
\begin{array}{l}
\displaystyle\mathrm{inv}(\sigma)+\!\!\!\sum_{u\in{Q\mathord{\setminus}Y}}\!\!\!|A_u|+\sum_{u\in{Y}}(|B_u|+2|C_u|x_u+|D_u|(1-x_u))\hspace{0.15\textwidth}\\[\bigskipamount]
\displaystyle\hfill+\frac{1}{2}\sum_{u\in{Y}}\sum_{v\in{E_u}}(1+x_u)(1-x_v)+2\sum_{u\in{Y}}\sum_{v\in{F_u}}x_u(1-x_v)
\end{array}
\end{equation}
where, for every vertex $u$ in $Y$,
$$
x_u=\left\{
\begin{array}{l}
1\mbox{ if $u$ is a leaf of some broom along the sequence of rotations,}\\
0\mbox{ if $u$ remains in the handle of all brooms in that sequence.}\\
\end{array}
\right.
$$
\end{lem}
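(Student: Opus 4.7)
The plan is to charge each rotation to a single pair of vertices and sum per-pair lower bounds. Fix any sequence $T_1 = R_0, R_1, \ldots, R_N = T_2$ of rotations realizing the transformation, and for each unordered pair $\{u,v\}$ of distinct vertices of $G$, let $R_{uv}$ denote the number of rotations in this sequence whose rotated edge is $\{u,v\}$. Since every rotation has exactly one rotated edge, $N = \sum_{\{u,v\}} R_{uv}$. It therefore suffices to lower-bound each $R_{uv}$ and verify that the bounds sum to expression~(\ref{CPV2.sec.1.lem.1.eq.1}).

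The key observation is that the ancestor-descendant relation between two vertices $u,v$ in the current broom can change in only two ways: either~(a) the rotated edge is $\{u,v\}$, contributing to $R_{uv}$, or~(b) one of $u,v$ is currently a leaf of the bottom $P$-vertex while the other is being popped out of or pushed into the handle through that same $P$-vertex. Since vertices of $P$ are never leaves, situation~(b) cannot arise when either of $u,v$ lies in $P$. For such pairs the number of $(u,v)$-relation flips is exactly $R_{uv}$, and by comparing the relations in $T_1$, $T_2$, and at the forced intermediate leaf state of any $Y$-vertex with $x_u = 1$, one obtains the bounds $R_{uv} \geq 1$ for pairs in $P \times P$ whose order differs in $T_1$ and $T_2$; $R_{uv} \geq 1$ whenever $u \in Q \setminus Y$ and $v \in A_u$; and for $u \in Y$, $v \in P$, both $R_{uv} \geq 1$ if $v \in B_u$ and $R_{uv} \geq 2 x_u$ if $v \in C_u$.

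For pairs with both vertices in $Q$, situation~(b) becomes available, but only when the ``other'' vertex of the pair is a leaf at the moment of the rotation. For $u \in Y$ and $v \in Q \setminus Y$ with $v \in D_u$, situation~(b) requires $u$ to visit the leaf state, so $R_{uv} \geq 1 - x_u$. For $u,v \in Y$ with $v \in E_u$, situation~(b) requires both $u$ and $v$ to visit the leaf state, giving $R_{uv} \geq 1 - x_u x_v$. For $u,v \in Y$ with $v \in F_u$ (so that $u$ is above $v$ in both $T_1$ and $T_2$), a case split on $(x_u, x_v) \in \{0,1\}^2$ yields $R_{uv} \geq 2 x_u (1 - x_v)$; the delicate subcase is $x_u = 1$, $x_v = 0$, in which two swaps of $\{u,v\}$ are needed, one to bring $v$ above $u$ before $u$'s pop and one to restore the $u$-above-$v$ order after $u$'s push, because $v$ is never a leaf and situation~(b) is unavailable. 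Summing all per-pair bounds over the unordered pairs then reproduces expression~(\ref{CPV2.sec.1.lem.1.eq.1}), using the identity $\sum_{u \in Y}\sum_{v \in E_u}(1+x_u)(1-x_v) = \sum_{u \in Y}\sum_{v \in E_u}(1 - x_u x_v)$, which follows from the symmetry $v \in E_u \Leftrightarrow u \in E_v$, to match the $E$-terms.

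The main obstacle will lie in the $Y \times Y$ case, where situation~(b) can substitute for direct swaps of $\{u,v\}$: for each configuration of $(x_u, x_v) \in \{0,1\}^2$, one must verify that no reordering of the pops and pushes beats the claimed lower bound, while also checking that the bounds vanish exactly in those configurations where situation~(b) can be exploited to save a swap.
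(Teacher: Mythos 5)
Your proposal is correct and follows essentially the same route as the paper: both arguments obtain the lower bound by counting, for each (unordered) pair of vertices, the rotations that must involve that pair, and then summing — the paper phrases it as ``rotations exchanging $u$ and $v$'' while you formalize this as charging each rotation to its rotated edge and bounding $R_{uv}$; your uniform expression $R_{uv}\geq 1-x_ux_v$ for pairs in $E_u$ and the identity $\sum_{u\in Y}\sum_{v\in E_u}(1+x_u)(1-x_v)=\sum_{u\in Y}\sum_{v\in E_u}(1-x_ux_v)$ reproduce exactly the paper's $\tfrac12(1+x_u)(1-x_v)$ term after accounting for the double count, and the per-pair bounds for $A_u$, $B_u$, $C_u$, $D_u$, $F_u$ all match the paper's case analysis.
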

\begin{proof}
Consider two vertices $u$ and $v$ of $G$. We will bound the number of rotations exchanging $u$ and $v$. If both $u$ and $v$ belong to $P$ and they do not have the same order in the handle of $T_1$ and $T_2$, then there is at least one rotation that exchanges them because the vertices of $P$ must belong to the handle of all brooms. Hence, the number of the rotations involving two vertices from $P$ is at least the number of inversions of $\sigma$, as desired.

If $u$ is a vertex from $Q\mathord{\setminus}Y$ and $v$ is a vertex from $A_u$, then at least one rotation involves $u$ and $v$ along the considered path, because $u$ is a leaf in one of the brooms $T_1$ or $T_2$ and is above $v$ in the other while $v$ must remain in the handle of all brooms along the sequence. Therefore, at least
\begin{equation}\label{CPV2.sec.1.lem.1.eq.7}
\sum_{u\in{Q\mathord{\setminus}Y}}\!\!|A_u|
\end{equation}
rotations involve a vertex from $Q\mathord{\setminus}Y$ and a vertex from $P$. 

Now suppose that $u$ belongs to $Y$. If $v$ belongs to $B_u$ then it has to be exchanged with $u$ by at least one rotation. Indeed, $v$ lies above $u$ in one of the brooms and below it in the other. Moreover, $v$ must remain in the handle of all brooms along the sequence of rotations because it belongs to $P$. If $v$ belongs to $C_u$ and $u$ becomes a leaf of some broom along the considered sequence of rotations, then it has to be exchanged with $v$ by two rotations (because it lies above $v$ in both brooms and $v$ must remain in the handle). Hence at least
\begin{equation}\label{CPV2.sec.1.lem.1.eq.2}
\sum_{u\in{Y}}|B_u|+2|C_u|x_u
\end{equation}
of the considered rotations involve a vertex in $Y$ and a vertex from $P$.
 
If $v$ belongs to $D_u$ and $u$ remains in the handle of all brooms along the sequence of rotations, then it has to be exchanged with $v$ by a rotation (because $v$ lies above $u$ in one of the brooms and is a leaf of the other). Hence, the number of rotations involving a vertex of $Y$ and a vertex from $Q\mathord{\setminus}Y$ is at least
\begin{equation}\label{CPV2.sec.1.lem.1.eq.3}
\sum_{u\in{Y}}|D_u|(1-x_u)\mbox{.}
\end{equation}

Now suppose that both $u$ and $v$ belong to $Y$. If $v$ belongs to $E_u$ and $u$ becomes a leaf of some broom along the sequence of rotations but $v$ does not, then $u$ and $v$ must be exchanged by at least one rotation. If $v$ belongs to $E_u$ but both $u$ and $v$ remain in the handle of all brooms along the sequence of rotations, then at least one rotation must involve $u$ and $v$ because the order of these vertices is not the same within the handle of $T_1$ and the handle of $T_2$. Hence, the number of rotations involving $u$ and a vertex from $E_u$ is at least
\begin{equation}\label{CPV2.sec.1.lem.1.eq.4}
\sum_{v\in{E_u}}x_u(1-x_v)+\sum_{v\in{E_u}}(1-x_u)(1-x_v)\mbox{.}
\end{equation}

Note that this expression could be simplified but it will be useful later to keep it in this form. If $v$ belongs to $F_u$ and $u$ becomes a leaf of some broom along the sequence of rotations but $v$ does not, then $u$ and $v$ must be exchanged by at least two rotations (one when $u$ goes down to become a leaf and another when it goes back up along the handle of the broom). The number of rotations involving $u$ and a vertex from $F_u$ is therefore at least
\begin{equation}\label{CPV2.sec.1.lem.1.eq.5}
2\sum_{v\in{F_u}}x_u(1-x_v)\mbox{.}
\end{equation}

Observe that if one sums (\ref{CPV2.sec.1.lem.1.eq.5}) when $u$ ranges over $Y$, no rotation is counted twice as the terms $x_u(1-x_v)$ and $x_v(1-x_u)$ cannot both be non-zero. The same goes for the first sum from (\ref{CPV2.sec.1.lem.1.eq.4}). However, if one sums to second sum from (\ref{CPV2.sec.1.lem.1.eq.4}) when $u$ ranges over $Y$ each of the corresponding rotations gets counted twice. As a consequence, there must be at least
\begin{equation}\label{CPV2.sec.1.lem.1.eq.6}
\frac{1}{2}\sum_{u\in{Y}}\sum_{v\in{E_u}}(1+x_u)(1-x_v)+2\sum_{u\in{Y}}\sum_{v\in{F_u}}x_u(1-x_v)
\end{equation}
rotations involving two vertices from $Y$. Summing $\mathrm{inv}(\sigma)$ with (\ref{CPV2.sec.1.lem.1.eq.7}), (\ref{CPV2.sec.1.lem.1.eq.2}), (\ref{CPV2.sec.1.lem.1.eq.3}), and (\ref{CPV2.sec.1.lem.1.eq.6}) provides the announced lower bound.  
\end{proof}

Let us denote the coordinates of a point $x$ from $\{0,1\}^{|Y|}$ by $x_u$ where $u$ ranges over $Y$. With that notation, (\ref{CPV2.sec.1.lem.1.eq.1}) can be thought of as a function $f:\{0,1\}^{|Y|}\rightarrow\mathbb{N}$ of the variable $x$. The lower bound stated by Lemma \ref{CPV2.sec.1.lem.1} turns out to be sharp. In order to see that, we will construct, for each point $x$ from $\{0,1\}^{|Y|}$, a sequence of exactly $f(x)$ rotations that transform $T_1$ into $T_2$. In that sequence, each vertex $u$ from $Y$ will appear as a leaf of some broom precisely when the coordinate of $x$ it corresponds to is equal to $1$.

Informally, the sequence of rotations that we are going to build first moves within $T_1$ those vertices of $Q$ contained in the handle that are not in $Y$ or whose corresponding coordinate of $x$ is equal to $1$. These vertices are moved down to the leaves, which can be done without using any  rotation that involves two of them. Then, the vertices remaining in the handle of the broom are permuted within that handle into their order in the handle of $T_2$, resulting in a broom $T$. This portion of the sequence of rotations remains in one or several of the permutohedral faces of $\mathcal{A}(G)$ but it should be noted that it may not remain entirely in a single permutohedral face of $\mathcal{A}(G)$ because the bottommost vertex of the handle can change along the sequence (though this vertex always belongs to $P$). Finally, the leaves of $T$ that appear in the handle of $T_2$ are moved up, again without using any rotation that involves two of them.

This construction is illustrated in Figure \ref{CPV2.sec.1.fig.1} in the case when $G$ is the claw from Figure \ref{CPV2.sec.0.5.fig.0}. In that example, the three vertices in $Q$ belong to the handle of both $T_1$ and $T_2$ hence $Y=\{1,2,3\}$. Moreover, we set $x_1$ to $1$ and the other two coordinates of $x$ to $0$ so the only vertex from $Y$ that becomes a leaf along the considered path is $1$. 
\begin{figure}
\begin{centering}
\includegraphics[scale=1]{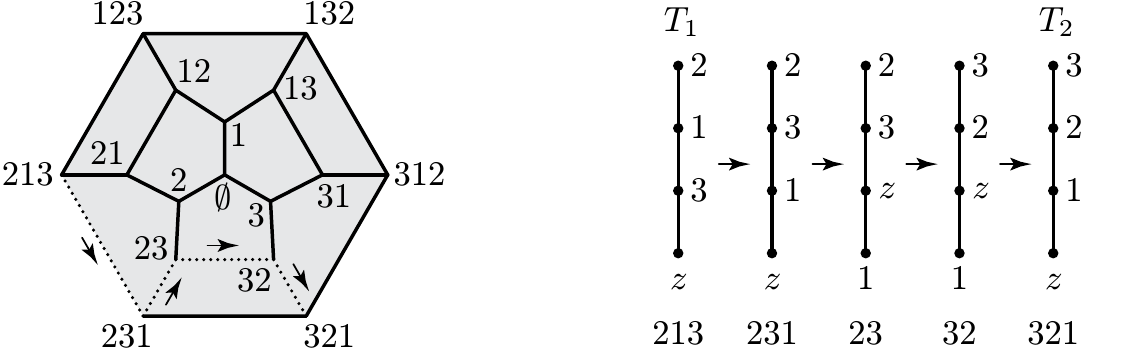}
\caption{The path built in the proof of Lemma \ref{CPV2.sec.1.lem.2} in the case of two brooms $T_1$ and $T_2$ on the claw $G$ shown in Figure \ref{CPV2.sec.0.5.fig.0}. The sequence of brooms is shown on the right, and the corresponding path in $\mathcal{A}(G)$ is dotted on the left.}\label{CPV2.sec.1.fig.1}
\end{centering}
\end{figure}
Note that this does not produce a geodesic path between $T_1$ and $T_2$ but, by Lemma \ref{CPV2.sec.1.lem.1}, this is a shortest path under the constraint that $1$ becomes a leaf along the path while $2$ and $3$ do not.

\begin{lem}\label{CPV2.sec.1.lem.2}
For any point $x$ in $\{0,1\}^{|Y|}$, there exists a sequence of exactly $f(x)$ rotations that transform $T_1$ into $T_2$ such that a vertex $u$ in $Y$ appears as a leaf of some broom along the sequence of rotations if and only if $x_u=1$.
\end{lem}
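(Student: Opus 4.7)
My plan is to build the path from $T_1$ to $T_2$ in three explicit phases and then to verify the count $f(x)$ by a pairwise analysis. Let $L_1$ be the set of vertices of $Q$ that either lie in $Q\mathord{\setminus}Y$ and belong to the handle of $T_1$, or lie in $Y$ with $x_u=1$; let $L_2$ be defined similarly from the handle of $T_2$.

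\textbf{Phase A (descent).} Processing the vertices of $L_1$ from the bottommost to the topmost in the handle of $T_1$, I demote each such $u$ by applying type-$1$ rotations to slide $u$ down past the vertices currently between it and the leaf-attached vertex, then a single type-$2$ rotation to turn $u$ into a leaf. The bottom-to-top order guarantees that two stacked vertices of $L_1$ are never exchanged, since the lower one is already a leaf when the upper one begins descending.

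\textbf{Phase B (reshuffling).} At the end of Phase~A, the handle contains exactly $P\cup Z$ where $Z=\{u\in Y:x_u=0\}$. Using Proposition~\ref{CPV2.sec.0.5.prop.1} (together with the observation that swapping two adjacent $P$-vertices at the bottom of the handle is a type-$1$ rotation, so that the leaf-attached vertex can be modified freely within $P$), I rearrange $P\cup Z$ into the relative order induced by the handle of $T_2$.

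\textbf{Phase C (ascent).} Processing the vertices of $L_2$ from the topmost to the bottommost in the handle of $T_2$, I promote each $u$ by a type-$3$ rotation followed by type-$1$ rotations that slide $u$ upward to its target position. By symmetry with Phase~A, no two vertices of $L_2$ are exchanged, and the broom reached at the end is $T_2$; the vertices of $Y$ that appear as a leaf along the way are exactly those with $x_u=1$.

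\textbf{Counting and main obstacle.} I match the total number of rotations to $f(x)$ by a case analysis on each unordered pair $\{u,v\}$: pairs in $P$ accumulate $\mathrm{inv}(\sigma)$ swaps during Phase~B; a pair $u\in Q\mathord{\setminus}Y$, $v\in A_u$ contributes the $|A_u|$ rotations performed while $u$ descends or ascends, including the type-$2$ or type-$3$ rotation when $v$ is the current leaf-attached vertex; pairs $u\in Y$, $v\in B_u$ contribute one swap (in Phase~B if $x_u=0$, in Phase~A or~C if $x_u=1$); pairs $u\in Y$ with $x_u=1$ and $v\in C_u$ contribute two swaps, one in Phase~A and one in Phase~C; pairs $u\in Y$ with $x_u=0$ and $v\in D_u$ contribute one swap in Phase~A or~C; pairs $u,v\in Y$ with $v\in E_u$ contribute the expected $\tfrac12[(1+x_u)(1-x_v)+(1+x_v)(1-x_u)]$ after a four-case check on $(x_u,x_v)$; pairs $u,v\in Y$ with $v\in F_u$ contribute $2$ when $x_u=1$, $x_v=0$ and $0$ otherwise. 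The delicate point, and the principal obstacle, is verifying that the bottom-to-top / top-to-bottom processing orders avoid every spurious swap; the most subtle subcase is $v\in E_u$ with $x_u=x_v=1$, where the $T_1$- and $T_2$-orders of $u,v$ disagree but both vertices leave the handle in Phase~A (in the order dictated by $T_1$) and return in Phase~C (in the order dictated by $T_2$), so that $u$ and $v$ are never exchanged, matching the vanishing factor $(1+x_u)(1-x_v)=0$.
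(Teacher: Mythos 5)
Your three-phase construction (descend the vertices of $S\cup S_1$ in bottom-to-top order, reshuffle the remaining handle, then ascend) is precisely the path built in the paper's proof, and your set $L_1$ coincides with the paper's $S\cup S_1$; the only difference is bookkeeping, where you verify the count $f(x)$ by a per-pair case analysis while the paper aggregates the rotation counts into the sums (\ref{CPV2.sec.1.lem.2.eq.1})--(\ref{CPV2.sec.1.lem.2.eq.7}) and algebraically rewrites them. Your treatment of the subtle $E_u$ subcase with $x_u=x_v=1$ matches the paper's observation that no rotation in the descent or ascent phase involves two demoted vertices.
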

\begin{proof}
Consider the subset $S$ of $Y$ of the vertices $u$ such that $x_u=1$. Further denote by $S_1$ the set of the vertices in $P\cup{Q}$ that belong to the handle of $T_1$ but are leaves of $T_2$. Likewise, let $S_2$ be the set of the vertices in $P\cup{Q}$ in the handle of $T_2$ that are leaves of $T_1$. We will explicitly build a sequence of rotations between $T_1$ and $T_2$. First consider the vertex from $S\cup{S_1}$ that is below all the other vertices in $S\cup{S_1}$ within the handle of $T_1$ and perform a sequence of rotation in order to move it down until it becomes a leaf. Do the same for the second lowest vertex of $S\cup{S_1}$ in the handle of $T_1$ and so on until all the vertices in $S\cup{S_1}$ have become leaves. Denote by $T'_1$ the broom that results from that sequence of rotations and note that on the way from $T_1$ to $T'_1$, no rotation has involved two vertices from $S\cup{S_1}$. Do the same starting from $T_2$ in order to obtain a broom $T'_2$ where all the vertices in $S\cup{S_2}$ have become leaves but no two of them were involved in the same rotation. The total number of rotations that have involved a vertex from $S$ so far is
\begin{equation}\label{CPV2.sec.1.lem.2.eq.1}
\sum_{u\in{S}}(|B_u|+2|C_u|+|E_u\mathord{\setminus}S|+2|F_u\mathord{\setminus}S|)\mbox{.}
\end{equation}

Indeed, each vertex in $S$ has been involved in as many rotations as there are vertices in the handles of $T_1$ and $T_2$ below it that belong to either $P$ or $Y\mathord{\setminus}S$. More precisely, each vertex $u$ in $S$ has been involved in one rotation with each vertex in $B_u$ and with each vertex in $E_u\mathord{\setminus}{S}$. It has also been involved in two rotations with each vertex in $C_u$ and with each vertex in $F_u\mathord{\setminus}{S}$. 

The number of rotations that have involved a vertex from $Q\mathord{\setminus}Y$ so far is
\begin{equation}\label{CPV2.sec.1.lem.2.eq.2}
\sum_{u\in{Q\mathord{\setminus}Y}}\!\!|A_u|+\!\!\sum_{u\in{Y\mathord{\setminus}S}}\!\!|D_u|\mbox{.}
\end{equation}

Indeed, each of the vertex from $Q\mathord{\setminus}Y$ has been involved in exactly one rotation with each of the vertices from $P$ below it (note that if a vertex in $Q\mathord{\setminus}Y$ is a leaf of both $T_1$ and $T_2$, then $A_u$ is empty). Moreover, each vertex contained in $Y\mathord{\setminus}S$ has been involved in exactly one rotation with each of the vertices from $Q\mathord{\setminus}Y$ above it in either of the brooms $T_1$ and $T_2$.

As $S$ consists of the vertices in $Y$ such that $x_u$ is equal to $1$ and $Y\mathord{\setminus}S$ of the vertices in $Y$ such that $x_u$ is equal to $0$,
$$
\left\{
\begin{array}{l}
\displaystyle|E_u\mathord{\setminus}S|=\sum_{v\in{E_u}}(1-x_v)\mbox{,}\\[1.5\bigskipamount]
\displaystyle|F_u\mathord{\setminus}S|=\sum_{v\in{F_u}}(1-x_v)\mbox{.}\\[-1.5\bigskipamount]
\end{array}
\right.
\vspace{\smallskipamount}
$$

The sum (\ref{CPV2.sec.1.lem.2.eq.1}) can be rewritten as
\begin{equation}\label{CPV2.sec.1.lem.2.eq.3}
\sum_{u\in{Y}}\left[(|B_u|+2|C_u|)x_u+\sum_{v\in{E_u}}x_u(1-x_v)+2\sum_{v\in{F_u}}x_u(1-x_v)\right]
\end{equation}
and the sum (\ref{CPV2.sec.1.lem.2.eq.2}) as
\begin{equation}\label{CPV2.sec.1.lem.2.eq.4}
\sum_{u\in{Q\mathord{\setminus}Y}}\!\!|A_u|+\!\!\sum_{u\in{Y}}|D_u|(1-x_u)\mbox{.}
\end{equation}

The number of rotations used to transform $T_1$ into $T'_1$ and $T_2$ into $T'_2$ is therefore the sum of (\ref{CPV2.sec.1.lem.2.eq.3}) and (\ref{CPV2.sec.1.lem.2.eq.4}). There remains to transform $T'_1$ into $T'_2$. Note that the vertices in the handles of these brooms are the same: they are precisely the vertices contained in $P\cup(Y\mathord{\setminus}S)$. Therefore, the distance between $T'_1$ into $T'_2$ is actually the number of inversions of the permutation $\tau$ obtained by relabeling the vertices of $P\cup(Y\mathord{\setminus}S)$ from $1$ to $|P\cup(Y\mathord{\setminus}S)|$ in their order from the top of the handle of $T'_1$ to its bottom and their reading these labels again in the handle of $T'_2$ from the top of the handle to its bottom. By construction, the number of inversions of $\tau$ that involve two vertices in $P$ is $\mathrm{inv}(\sigma)$. The number of inversions of $\tau$ that involve a vertex in $Y\mathord{\setminus}S$ and a vertex in $P$ is
\begin{equation}\label{CPV2.sec.1.lem.2.eq.5}
\sum_{u\in{Y\mathord{\setminus}S}}\!\!|B_u|
\end{equation}
and the number of inversions that involve two vertices from $Y\mathord{\setminus}S$ is
\begin{equation}\label{CPV2.sec.1.lem.2.eq.6}
\frac{1}{2}\sum_{u\in{Y\mathord{\setminus}S}}\sum_{v\in{E_u}}(1-x_v)\mbox{.}
\end{equation}

Note that there is a $1/2$ coefficient in the last expression because otherwise each inversion between two vertices $u$ and $v$ in $Y\mathord{\setminus}S$ would be counted twice: once as the term $1-x_v$ in the sum over $E_u$ and once as the term $1-x_u$ in the sum over $E_v$. Recall in particular that $v$ belongs to $E_u$ if and only if $u$ belongs to $E_v$. Now observe that since $x_u$ is equal to $0$ when $u$ belongs to $Y\mathord{\setminus}S$, the sum of (\ref{CPV2.sec.1.lem.2.eq.5}) and (\ref{CPV2.sec.1.lem.2.eq.6}) can be rewritten as
\begin{equation}\label{CPV2.sec.1.lem.2.eq.7}
\sum_{u\in{Y}}\!\!|B_u|(1-x_u)+\frac{1}{2}\sum_{u\in{Y}}\sum_{v\in{E_u}}(1-x_u)(1-x_v)\mbox{.}
\end{equation}

Summing $\mathrm{inv}(\sigma)$ with (\ref{CPV2.sec.1.lem.2.eq.3}), (\ref{CPV2.sec.1.lem.2.eq.4}), and (\ref{CPV2.sec.1.lem.2.eq.7}) shows that the length of the constructed path is indeed $f(x)$ as desired.
\end{proof}

\section{Rotation distance and quadratic $0/1$ optimization}\label{CPV2.sec.2}

In view of Lemmas \ref{CPV2.sec.1.lem.1} and \ref{CPV2.sec.1.lem.2}, finding the rotation distance between $T_1$ and $T_2$ amounts to find the minimum of $f$ when $x$ ranges over $\{0,1\}^{|Y|}$. Note that $f$ is a quadratic function of $x$. Minimizing a quadratic function of $0/1$ variables is a well-known problem of combinatorial optimization. This problem is \NP\nobreakdash-hard in general as it admits the weighted max-cut problem as a special case \cite{Ivanescu1965}. However, depending on the form of the quadratic function to minimize, it can be polynomial time solvable \cite{AllemandFukudaLieblingSteiner2001,CelaPunnen2022,PicardRatliff1975}. This is the case in particular when the coefficients of all the products between two distinct variables in the expression of the quadratic function are non-positive \cite{CelaPunnen2022,PicardRatliff1975}. One can see by looking at (\ref{CPV2.sec.1.lem.1.eq.1}) that $f$ is precisely of this form. The following proposition explicitly provides the constant, linear, and quadratic terms in the expression of $f$.

\begin{prop}\label{CPV2.sec.1.prop.1}
$f(x)=c+\ell (x)+q(x)$ where
$$
\left\{
\begin{array}{l}
\displaystyle{c=\mathrm{inv}(\sigma)+\!\!\!\sum_{u\in{Q\mathord{\setminus}Y}}\!\!\!|A_u|+\sum_{u\in{Y}}\left(|B_u|+|D_u|+\frac{1}{2}|E_u|\right)\mbox{,}}\\[1.5\bigskipamount]
\displaystyle{\ell (x)=\sum_{u\in{Y}}(2|C_u|-|D_u|+2|F_u|)x_u}\mbox{,}\\[1.5\bigskipamount]
\displaystyle{q(x)=\frac 12 \sum_{u\in{Y}}\sum_{v\in{Y}}q_{u,v}x_ux_v}\mbox{,}\\
\end{array}
\right.
$$
and for every two vertices $u$ and $v$ from $Y$,
$$
q_{u,v}=
\left\{
\begin{array}{rl}
0 & \mbox{ when }u=v\mbox{,}\\
-1 & \mbox{ when }v\in{E_u}\mbox{, and}\\
-2 & \mbox{ otherwise}\mbox{.}\\
\end{array}
\right.
$$
\end{prop}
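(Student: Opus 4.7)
The plan is to expand the formula (\ref{CPV2.sec.1.lem.1.eq.1}) for $f$ directly and sort the resulting terms by degree in the variables $x_u$. First I would expand the products $(1+x_u)(1-x_v) = 1 + x_u - x_v - x_u x_v$ and $x_u(1-x_v) = x_u - x_u x_v$ and distribute $|D_u|(1-x_u) = |D_u| - |D_u|x_u$, after which every summand is manifestly either constant, linear in a single $x_u$, or a product $x_u x_v$.

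The constant part would then be read off by collecting $\mathrm{inv}(\sigma)$, $\sum_{u \in Q\setminus Y}|A_u|$, $\sum_{u \in Y}|B_u|$, $\sum_{u \in Y}|D_u|$, and the constant $\tfrac{1}{2}\sum_{u \in Y}|E_u|$ arising from the $E$-sum, which gives exactly the claimed $c$.

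For the linear part, the contributions $2|C_u|x_u$, $-|D_u|x_u$, and $2|F_u|x_u$ are already in the target form; the only subtlety is the linear contribution from the $E$-sum, which I would rewrite as
\[
\tfrac{1}{2}\sum_{u \in Y}\sum_{v \in E_u}(x_u - x_v) = \tfrac{1}{2}\sum_{u \in Y}|E_u|x_u - \tfrac{1}{2}\sum_{v \in Y}\bigl|\{u : v \in E_u\}\bigr|x_v,
\]
and then invoke the symmetry $v \in E_u \Leftrightarrow u \in E_v$ to observe that the inner cardinality equals $|E_v|$, so the two sums cancel. This yields exactly the stated $\ell(x)$.

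The main (and essentially only) delicate step is the quadratic part, whose contributions after expansion are $-\tfrac{1}{2}\sum_{u \in Y}\sum_{v \in E_u}x_u x_v$ and $-2\sum_{u \in Y}\sum_{v \in F_u}x_u x_v$. I would recast these in the symmetric form $\tfrac{1}{2}\sum_{u,v \in Y}q_{u,v}x_u x_v$ by exploiting the opposite symmetry structures of $E$ and $F$: the relation $v \in E_u$ is symmetric, while by definition $v \in F_u$ and $u \in F_v$ are mutually exclusive. For each unordered pair $\{u,v\}\subseteq Y$ with $v \in E_u$ the $E$-sum visits $x_u x_v$ twice with coefficient $-\tfrac{1}{2}$, which forces $q_{u,v} = q_{v,u} = -1$. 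For each unordered pair with $u \neq v$ and $v \notin E_u$, exactly one of $v \in F_u$ or $u \in F_v$ holds---because any two vertices of $Y$ are linearly comparable in both handles---so the $F$-sum visits $x_u x_v$ exactly once with coefficient $-2$, which the symmetric double sum absorbs by setting $q_{u,v} = q_{v,u} = -2$. This matches the \emph{otherwise} clause of the stated definition of $q_{u,v}$, and no further obstacle is anticipated.
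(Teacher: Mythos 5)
Your proposal is correct and follows essentially the same route as the paper's proof: expand the bilinear terms, cancel the linear $E$-contributions using the symmetry $v\in E_u \Leftrightarrow u\in E_v$, and classify the quadratic coefficients by noting that for $u\neq v$ either $v\in E_u$ (symmetric, double-counted with weight $-\tfrac12$ each) or exactly one of $v\in F_u$, $u\in F_v$ holds (since distinct vertices of $Y$ are totally ordered in each handle). The paper performs the same case analysis, just organized slightly differently in its intermediate displays.
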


\begin{proof}
Observe that
\begin{equation}\label{CPV2.sec.1.prop.1.eq.1}
\begin{array}{l}
\displaystyle\frac{1}{2}\sum_{u\in{Y}}\sum_{v\in{E_u}}(1+x_u)(1-x_v)=\frac{1}{2}\sum_{u\in{Y}}|E_u|(1+x_u)-\frac{1}{2}\sum_{u\in{Y}}\sum_{v\in{E_u}}x_v\\[1.5\bigskipamount]
\displaystyle\hfill-\frac{1}{2}\sum_{u\in{Y}}\sum_{v\in{E_u}}x_ux_v\mbox{.}
\end{array}
\end{equation}

However, recall that the vertex $v$ belongs to $E_u$ if and only if the vertex $u$ belongs to $E_v$. As a consequence, the number of vertices $u$ in $Y$ such that $v$ belongs to $E_u$ is equal to $|E_v|$, and we obtain
\begin{equation}\label{CPV2.sec.1.prop.1.eq.2}
\sum_{u\in{Y}}\sum_{v\in{E_u}}x_v=\sum_{v\in{Y}}|E_v|x_v\mbox{.}
\end{equation}

Combining (\ref{CPV2.sec.1.prop.1.eq.1}) and (\ref{CPV2.sec.1.prop.1.eq.2}) yields
$$
\frac{1}{2}\sum_{u\in{Y}}\sum_{v\in{E_u}}(1-x_u)(1-x_v)=\frac{1}{2}\sum_{u\in{Y}}|E_u|-\frac{1}{2}\sum_{u\in{Y}}\sum_{v\in{E_u}}x_ux_v
$$

Further observe that
$$
2\sum_{u\in{Y}}\sum_{v\in{F_u}}x_u(1-x_v)=2\sum_{u\in{Y}}|F_u|x_u-2\sum_{u\in{Y}}\sum_{v\in{F_u}}x_ux_v\mbox{.}
$$

As a consequence,
\begin{equation}\label{CPV2.sec.1.prop.1.eq.3}
f(x)=c+\ell (x)-\sum_{u\in{Y}}\!\left[\frac{1}{2}\sum_{v\in{E_u}}x_ux_v+2\sum_{v\in{F_u}}x_ux_v\right]\!\!.
\end{equation}

It remains to compute the coefficient of each term of the form $x_ux_v$ in the right-hand side of (\ref{CPV2.sec.1.prop.1.eq.3}), where $u$ and $v$ are two vertices from $Y$. First note that there is no such term when $u=v$ because $u$ does not belong to $E_u$ or to $F_u$. Let $u$ and $v$ be two distinct vertices from $Y$. Recall that $v$ belongs to $E_u$ if and only if $u$ belongs to $E_v$. Moreover, $E_u$ is disjoint from $F_u$ and $E_v$ from $F_v$. Hence, if $v$ belongs to $E_u$, then there are exactly two terms involving both $x_u$ and $x_v$ in the right-hand side of (\ref{CPV2.sec.1.prop.1.eq.3}), one in the sum over the elements of $E_u$ and one in the sum over the elements of $E_v$ whose sum is $-x_ux_v$. Now if $v$ belongs to $F_u$, then $u$ cannot belong to $F_v$ because $v$ is below $u$ in both $T_1$ and $T_2$. Hence the term $-2x_ux_v$ (that comes from the sum over the elements of $F_u$) is the only one that involves both $x_u$ and $x_v$ in the right-hand side of (\ref{CPV2.sec.1.prop.1.eq.1}). If however, $v$ does not belong to $E_u\cup{F_u}$ then $u$ must belong to $F_v$. Indeed, as $v$ is not in $E_u$, it must be below $u$ in both $T_1$ and $T_2$ or above $u$ in both $T_1$ and $T_2$. However, the former is impossible because $v$ is not in $F_u$. Therefore, the term $-2x_vx_u$ (that comes from the sum over the elements of $F_v$) is the only one that involves both $x_u$ and $x_v$ in the left-hand side of (\ref{CPV2.sec.1.prop.1.eq.1}). As a consequence,
$$
-\sum_{u\in{Y}}\!\left[\frac{1}{2}\sum_{v\in{E_u}}x_ux_v+2\sum_{v\in{F_u}}x_ux_v\right]\!\!=q(x)
$$
which completes the proof of the proposition.
\end{proof}

According to Lemmas \ref{CPV2.sec.1.lem.1} and \ref{CPV2.sec.1.lem.2}, computing the distance between $T_1$ and $T_2$ amounts to minimize $f$, a quadratic function of $x$, over $\{0,1\}^{|Y|}$. By Proposition~\ref{CPV2.sec.1.prop.1}, all the terms of the form $x_ux_v$ in the expression of $f$ have non-positive coefficients. As a consequence, this $0/1$ quadratic minimization problem is polynomial time solvable in $|Y|$ \cite{CelaPunnen2022,PicardRatliff1975}, and we obtain the following.

\begin{thm}\label{CPV2.sec.2.thm.1}
The rotation distance of two given brooms on a complete split graph with $n$ vertices can be computed in time polynomial in $n$.
\end{thm}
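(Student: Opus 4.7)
The plan is to string together the three preceding results to reduce the rotation-distance computation to a polynomially-solvable case of quadratic pseudo-boolean minimization. By Lemmas~\ref{CPV2.sec.1.lem.1} and~\ref{CPV2.sec.1.lem.2}, the rotation distance between $T_1$ and $T_2$ is exactly
$$
\min_{x \in \{0,1\}^{|Y|}} f(x),
$$
since Lemma~\ref{CPV2.sec.1.lem.1} provides the lower bound and Lemma~\ref{CPV2.sec.1.lem.2} exhibits, for every $x$, a sequence of rotations achieving that value. So computing the rotation distance is equivalent to minimizing $f$ over the Boolean cube.

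Next, I would invoke Proposition~\ref{CPV2.sec.1.prop.1}, which expresses $f(x) = c + \ell(x) + q(x)$ as a pseudo-boolean quadratic function whose off-diagonal coefficients $q_{u,v}$ take values only in $\{0, -1, -2\}$. In particular, every quadratic coefficient is non-positive. This is precisely the structural hypothesis under which quadratic pseudo-boolean minimization is known to be polynomial-time solvable: such instances reduce to a minimum $(s,t)$-cut computation in an auxiliary capacitated network, a classical result of Picard and Ratliff surveyed in~\cite{CelaPunnen2022,PicardRatliff1975}. Applying this off-the-shelf algorithm to $f$ returns the minimum in time polynomial in $|Y|$, hence polynomial in $n$.

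The remaining step is to check that the data defining $f$ can be assembled from the input in polynomial time. The set $Y$, the permutation $\sigma$ and its inversion count $\mathrm{inv}(\sigma)$, and all the sets $A_u$, $B_u$, $C_u$, $D_u$, $E_u$, $F_u$, are read off directly from $T_1$ and $T_2$ by scanning the two handles and comparing the relative orders of vertices; this is a routine $O(n^2)$ preprocessing. Therefore the constant $c$, the linear coefficients of $\ell$, and the matrix of quadratic coefficients $q_{u,v}$ are all available in polynomial time, which, combined with the polynomial-time minimization step, gives the claimed complexity.

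I do not anticipate a real obstacle here: the hard work has already been done in Lemmas~\ref{CPV2.sec.1.lem.1} and~\ref{CPV2.sec.1.lem.2} (which together certify that $f$ truly captures the distance) and in Proposition~\ref{CPV2.sec.1.prop.1} (which pinpoints the non-positivity of the $q_{u,v}$). The only nontrivial input to the argument is the cited fact that supermodular $0/1$ quadratic minimization is polynomial, and this is invoked as a black box. The more quantitative quasi-quadratic bound $O(n^{2+o(1)})$ announced in Theorem~\ref{CPV2.sec.0.thm.1} will require a sharper analysis of the resulting min-cut instance, which is deferred to Section~\ref{CPV2.sec.3}; for the present theorem, mere polynomiality suffices.
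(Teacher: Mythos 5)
Your proposal matches the paper's argument: both combine Lemmas~\ref{CPV2.sec.1.lem.1} and~\ref{CPV2.sec.1.lem.2} to identify the rotation distance with $\min_{x\in\{0,1\}^{|Y|}} f(x)$, then invoke Proposition~\ref{CPV2.sec.1.prop.1} to observe that all quadratic coefficients $q_{u,v}$ are non-positive, and appeal to the Picard--Ratliff reduction to minimum cut for polynomial-time solvability. One small terminological slip: in your closing sentence the relevant tractable class is \emph{submodular} (not supermodular) quadratic $0/1$ minimization, which is what non-positive off-diagonal coefficients give; the substance of your argument and the citations are otherwise correct.
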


Not only is the minimum of $f$ over $\{0,1\}^{|Y|}$ equal to the rotation distance between $T_1$ and $T_2$, but any value of $x$ at which $f$ reaches its minimum provides a geodesic path between $T_1$ and $T_2$ via Lemma \ref{CPV2.sec.1.lem.2}. Let us illustrate this in the case of the stellohedron, hence when $P$ is a singleton. We consider two brooms $T_1$ and $T_2$ such that all the vertices in $Q$ belong to the handle of both brooms but their order from top to bottom is opposite in $T_1$ and in $T_2$. The unique vertex in $P$ appears at the bottom of both handles.

These particular pairs of brooms are interesting for two reasons. The first is that their rotation distance is equal to the diameter of the graph of $\mathcal{A}(G)$ \cite{MannevillePilaud2015}. The second is that, even though the unique permutohedral facet of $\mathcal{A}(G)$ admits both $T_1$ and $T_2$ as vertices (as discussed in the end of Section \ref{CPV2.sec.0.5}), none of the geodesic paths between $T_1$ and $T_2$ in the graph of $\mathcal{A}(G)$ remain entirely in that facet when $q$ is greater than $5$ \cite{MannevillePilaud2015}. In other words, within the graph of $\mathcal{A}(G)$, the subgraph consisting of the vertices and edges of that permutohedral facet is sometimes not strongly convex. This particular property has been studied under different names and in different contexts related to flip graphs (see, for instance \cite{DisarloParlier2019,MannevillePilaud2015,PourninWang2021,SleatorTarjanThurston1988}).

For the brooms $T_1$ and $T_2$ defined above, $Y$ is equal to $Q$. Moreover, for every vertex $u$ in $Y$, the sets $B_u$, $D_u$ and $F_u$ are empty while $C_u$ is equal to $P$ and $E_u$ to $Y\mathord{\setminus}\{u\}$. Hence, it follows from Proposition \ref{CPV2.sec.1.prop.1} that
$$
f(x)=\frac{q(q-1)}{2}+2\sum_{u\in{Y}}x_u-\frac{1}{2}\sum_{u\in{Y}}\sum_{\substack{v\in{Y}\\v\neq{u}}}x_ux_v\mbox{.}
$$

However, as $x_u^2$ is equal to $x_u$ for all the vertices $u$ in $Y$,
$$
f(x)=\frac{q(q-1)}{2}+\frac{5}{2}\sum_{u\in{Y}}x_u-\frac{1}{2}\!\left(\sum_{u\in{Y}}x_u\right)^{\!2}\mbox{.}
$$

Minimizing $f$ then amounts to search for the number $i$ of vertices from $Q$ that should move down and become a leaf along a geodesic path. As
\begin{equation}\label{CPV2.sec.2.eq.1}
\frac{q(q-1)}{2}+\frac{5}{2}i-\frac{1}{2}i^2
\end{equation}
is a concave function of $i$, in order to minimize this quantity, it suffices to compute it when $i$ is equal to $0$ and when $i$ is equal to $q$. In the former case, (\ref{CPV2.sec.2.eq.1}) is equal to $q(q-1)/2$ and in the latter, to $2q$. Therefore, we recover the observation from \cite{MannevillePilaud2015}: if $q$ is greater than $5$, then
$$
\frac{q(q-1)}{2}>2q
$$
and all the vertices in $Q$ have to become leaves along any geodesic path between $T_1$ and $T_2$. 
In polyhedral terms, all the geodesic paths between $T_1$ and $T_2$ must leave the permutohedral facet of $\mathcal{A}(G)$. 

Let us now turn our attention to the form of the solutions to our quadratic $0/1$ minimization problem. It might seem counter intuitive that, given a vertex $u$ in $Y$ and a vertex $v$ in $F_u$, the former vertex should be allowed to move down to the leaves of the broom (that is, $x_u$ is set to $1$), while the latter remains in the handle ($x_v$ is set to $0$). Indeed, if it is shorter to move $u$ to the leaves of the broom, then it should also be shorter to do the same with $v$.

The following lemma formalizes this observation.

\begin{lem}\label{CPV2.sec.2.lem.1}
The function $f$ reaches its minimum over $\{0,1\}^{|Y|}$ at a point $x$ such that $x_u\leq{x_v}$ for any vertex $u$ in $Y$ and any vertex $v$ in $F_u$.
\end{lem}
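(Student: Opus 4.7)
The plan is to prove the lemma by an exchange argument. Assume for contradiction that $x^*$ minimizes $f$ over $\{0,1\}^{|Y|}$ but that there exist $u\in Y$ and $v\in F_u$ with $x^*_u=1$ and $x^*_v=0$; define $x'\in\{0,1\}^{|Y|}$ by swapping those two coordinates: $x'_u=0$, $x'_v=1$, and $x'_w=x^*_w$ for every other $w\in Y$. The goal is to show $f(x')<f(x^*)$, contradicting optimality; this will in fact prove the stronger statement that \emph{every} minimizer of $f$ satisfies the condition.

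Using the expression given by Proposition \ref{CPV2.sec.1.prop.1} and setting $\alpha_a=2|C_a|-|D_a|+2|F_a|$, exploiting the symmetry $q_{a,b}=q_{b,a}$ and $q_{u,v}=-2$ (since $v\in F_u$ gives $v\notin E_u$), a direct expansion of $\Delta f=f(x')-f(x^*)$ simplifies to
\[ \Delta f \;=\; (\alpha_v-\alpha_u) + \sum_{b\in E_v\setminus E_u}x^*_b - \sum_{b\in E_u\setminus E_v}x^*_b. \]
The key combinatorial observation is that $v\in F_u$ means $v$ lies strictly below $u$ in both brooms, yielding the inclusions $C_v\subseteq C_u$, $F_v\subseteq F_u$ and $D_u\subseteq D_v$. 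Letting $H_v$ denote the set of $Y$-vertices above $v$ in both brooms and partitioning $F_u\setminus F_v$ according to the position of each vertex relative to $v$ (either $v$ itself, a vertex lying between $u$ and $v$ in both brooms, or a vertex of $E_v$) gives $|F_u|-|F_v|=1+|F_u\cap H_v|+|F_u\cap E_v|$; combining with the other two inclusions yields $\alpha_v-\alpha_u\leq -2-2|F_u\cap E_v|$.

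The second crucial identity is $E_v\setminus E_u=F_u\cap E_v$: indeed, if $b\in Y$ is above $v$ in one broom and below in the other while not lying in $E_u$, then since $u$ is above $v$ in both brooms, $b$ cannot be above $u$ in both (else it would be above $v$ in both), so $b$ must lie below $u$ in both and hence in $F_u$. This bounds $\sum_{b\in E_v\setminus E_u}x^*_b\leq|F_u\cap E_v|$, and substituting yields
\[ \Delta f \;\leq\; -2 - 2|F_u\cap E_v| + |F_u\cap E_v| \;\leq\; -2 \;<\; 0, \]
contradicting optimality. The main obstacle is the combinatorial bookkeeping that produces the bound $\alpha_v-\alpha_u\leq -2-2|F_u\cap E_v|$ and the identity $E_v\setminus E_u=F_u\cap E_v$; once these are in hand, the remaining algebra is routine.
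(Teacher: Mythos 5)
Your proof is correct and takes a genuinely different, cleaner route than the paper's. The paper considers two \emph{separate} single-coordinate modifications of a minimizer $x$ --- setting $x_u$ to $0$ (giving $x'$) or setting $x_v$ to $1$ (giving $x''$) --- and proves the averaging inequality $2f(x)-f(x')-f(x'')\geq 0$, so that at least one of $x', x''$ is again a minimizer; to turn this into the statement of the lemma it then needs a secondary descent on the number $m(x)$ of violating pairs, together with an extremal choice of the pair $(u,v)$ (maximizing $|F_u|$ and then optimizing $|F_v|$) to guarantee that $m$ strictly decreases. You instead perform the simultaneous swap $(x_u,x_v)\mapsto(0,1)$ and show directly that
\[
\Delta f \;=\; (\alpha_v-\alpha_u)+\sum_{b\in E_v\setminus E_u}x^*_b-\sum_{b\in E_u\setminus E_v}x^*_b \;\leq\; -2-|F_u\cap E_v| \;<\;0,
\]
so \emph{any} violating pair immediately contradicts optimality, with no extremal choice and no induction on $m$. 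The two new ingredients that make the one-step argument close are the quantitative refinement of $F_v\subseteq F_u$ via the partition $F_u\setminus F_v=\{v\}\sqcup(F_u\cap H_v)\sqcup(F_u\cap E_v)$, which gives $|F_u|-|F_v|\geq 1+|F_u\cap E_v|$, and the identity $E_v\setminus E_u=F_u\cap E_v$, which caps the positive contribution of the cross terms by exactly the slack just created. Both arguments use the expression of $f$ from Proposition~\ref{CPV2.sec.1.prop.1} and the inclusions $C_v\subseteq C_u$, $D_u\subseteq D_v$, $F_v\subseteq F_u$, but yours is more local, and it proves the strictly stronger conclusion that \emph{every} minimizer of $f$ satisfies $x_u\leq x_v$ whenever $v\in F_u$, whereas the paper's non-strict averaging inequality only yields that some minimizer does.
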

\begin{proof}
Consider a point $x$ in $\{0,1\}^{|Y|}$ and denote by $m(x)$ the number of pairs of vertices $u$ and $v$ of $G$ such that $u$ belongs to $Y$ and $v$ to $F_u$ but $x_u$ is equal to $1$ and $x_v$ to $0$. Let us assume that $f$ is minimal at $x$ and that, under this requirement on $x$, $m(x)$ is minimal. If $m(x)$ is equal to $0$ then we are done. Otherwise, let us aim for a contradiction. As $m(x)$ is positive then there exist a vertex $u$ in $Y$ and a vertex $v$ in $F_u$ such that $x_u$ is equal to $1$ and $x_v$ to $0$. Consider two such vertices $u$ and $v$. We will require without loss of generality that $u$ is chosen among the possible candidates in such a way that $F_u$ is as large as possible. Likewise, $v$ is chosen within $F_u$ such that $F_v$ is the largest possible under the requirement that $x_v$ is equal to $0$.

Now consider the point $x'$ of $\{0,1\}^{|Y|}$ obtained from $x$ by changing $x_u$ to $0$. Similarly, let $x''$ be the point in $\{0,1\}^{|Y|}$ one gets by switching $x_v$ to $1$ in $x$. Note that $m(x')$ and $m(x'')$ are both less than $m(x)$ by our choice for $u$ and $v$. It follows from Proposition \ref{CPV2.sec.1.prop.1} that
$$
f(x)-f(x')=2|C_u|-|D_u|+2|F_u|+\sum_{w\in{Y}}q_{u,w}x_w
$$
and
$$
f(x)-f(x'')=-2|C_v|+|D_v|-2|F_v|-\sum_{w\in{Y}}q_{v,w}x_w\mbox{.}
$$

As $v$ belongs to $F_u$, that vertex lies below $u$ both in the handles of $T_1$ and in the handle of $T_2$. Hence, $C_v$ is a subset of $C_u$ and $D_u$ is a subset of $D_v$. Moreover, $F_v$ is a subset of $F_u$. As a consequence,
\begin{equation}\label{CPV2.sec.2.lem.1.eq.1}
2f(x)-f(x')-f(x'')\geq2|F_u\mathord{\setminus}F_v|+\sum_{w\in{Y}}(q_{u,w}-q_{v,w})x_w
\end{equation}

The right-hand side of this inequality must be non-negative. Indeed, consider a vertex $w$ in $Y$. According to Proposition \ref{CPV2.sec.1.prop.1}, the difference $q_{u,w}-q_{v,w}$ is only possibly negative when $w$ is equal to $v$ or when $w$ belongs to $F_u\cap{E_v}$. Moreover, in the latter case this difference is equal to $-1$. However, if $w$ is equal to $v$, then $x_w$ is equal to $0$ and the product $(q_{u,w}-q_{v,w})x_w$ vanishes. Therefore,
$$
2f(x)-f(x')-f(x'')\geq2|F_u\mathord{\setminus}F_v|-|F_u\cap{E_v}|\mbox{.}
$$

Now observe that $F_u\cap{E_v}$ is a subset of $F_u\mathord{\setminus}F_v$ because $E_v$ and $F_v$ are disjoint. As a consequence, $2f(x)-f(x')-f(x'')$ is non-negative. It follows that the differences $f(x)-f(x')$ and $f(x)-f(x'')$ cannot both be negative and that $f$ is minimal at $x'$ or at $x''$. However, by our choice for $u$ and $v$, we know that $m(x')$ and $m(x'')$ are both less than $m(x)$, contradicting the minimality of $m(x)$ among the points from $\{0,1\}^{|Y|}$ where $f$ reaches its minimum.
\end{proof}

\begin{rem}
If $x$ is a point from $\{0,1\}^{|Y|}$ such that $f$ reaches its minimum at $x$ and $x_u\leq{x_v}$ for any vertex $u$ in $Y$ and any vertex $v$ in $F_u$, then the last nested sum in (\ref{CPV2.sec.1.lem.1.eq.1}) vanishes at that point because $x_u$ cannot be equal to $1$ while $x_v$ is equal to $0$. Equivalently, the terms of the form $-2x_ux_v$ in the expression of $q$ from Proposition \ref{CPV2.sec.1.prop.1} and the terms of the form $2|F_u|x_u$ in the expression of $\ell$ cancel because either $x_u$ is equal to $0$ or both $x_u$ and $x_v$ are equal to $1$. However, it is important to note that, even though Lemma \ref{CPV2.sec.2.lem.1} states that such a minimum for $f$ always exists, the last nested sum in (\ref{CPV2.sec.1.lem.1.eq.1}) is still required in order for $f$ to properly model the rotation distance between $T_1$ and $T_2$ and for the optimization to yield a correct optimum. 
\end{rem}

\section{Rotation distance and the minimum cut problem}\label{CPV2.sec.3}

In this section, we show how computing the rotation distance between $T_1$ and $T_2$ can be reduced to an instance of the minimum cut problem in a weighted graph $H$. Our goal is to improve Theorem \ref{CPV2.sec.2.thm.1} into Theorem \ref{CPV2.sec.0.thm.1}.

We apply the techniques from \cite{CelaPunnen2022,PicardQueyranne1982,PicardRatliff1975}. The vertex set of $H$ is $Y\cup\{s,t\}$ where $Y$ is the subset of $Q$ defined in Section \ref{CPV2.sec.1} while $s$ and $t$ are two additional vertices.
The edges of $H$ are between every two vertices in $Y$, between $s$ and any vertex in $Y$, and between $t$ and every vertex in $Y$. In other words, $H$ is the complete split graph such that $Y$ induces a clique and $\{s,t\}$ is an independent set. An edge of $H$ between two vertices $u$ and $v$ in $Y$ is given a weight
$$
w_{u,v}=-\frac{1}{2}q_{u,v}=
\left\{
\begin{array}{rl}
0 & \mbox{ when } u=v\mbox{,}\\
1/2 & \mbox{ when }v\in{E_u}\mbox{, and}\\
1 & \mbox{ otherwise}\mbox{,}\\
\end{array}
\right.
$$
where $q_{u,v}$ is the non-positive coefficient of the monomial $x_ux_v$ in the expression of $q(x)$ from Proposition~\ref{CPV2.sec.1.prop.1}. We further denote
$$
\ell_u=2|C_u| - |D_u| + 2|F_u|\mbox{,}
$$
the coefficient of $x_u$ in the expression of $\ell (x)$ from Proposition~\ref{CPV2.sec.1.prop.1}, and
$$
r_u=\sum_{v\in Y} -w_{u,v}\mbox{.}
$$

For every edge of $H$ between $s$ and a vertex $v$ in $Y$, we define
$$
w_{s,v}= \max\{0, -r_v-\ell_v\}\mbox{,}
$$
and for every edge of $H$ between $t$ and a vertex $u$ in $Y$,
$$
w_{u,t}=\max\{0, r_u + \ell_u\}\mbox{.}
$$

Observe that all the edge weights thus defined are non-negative.

An $(s,t)$-cut of value $w$ in $H$ is a bipartition of the vertices of $H$ such that $s$ and $t$ are not in the same part and the weights of all the edges between the two parts sum to exactly $w$. Such a cut can be encoded by the point $x$ from $\{0,1\}^{|Y|}$ whose coordinate $x_u$ is set to $1$ when $u$ belongs to the part that contains $s$ and to $0$ when $u$ belongs to the part that contains $t$.

The following lemma states that minimizing the function $\ell(x)+q(x)=f(x)-c$ on $\{0,1\}^{|Y|}$ amounts to solving the minimum $(s,t)$-cut problem in $H$. Note that this indeed solves our problem, since the constant term $c$ in the expression of $f(x)$ can be safely ignored when it comes to minimizing.

\begin{lem}
The graph $H$ contains an $(s,t)$-cut of value $w$ if and only if there exists a point $x$ in $\{0,1\}^{|Y|}$ such that
$$
\sum_{v\in Y} w_{s,v} + \ell (x) + q(x)=w\mbox{.}
$$
\end{lem}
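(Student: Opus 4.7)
The plan is to establish the equivalence by a direct computation of the weight of an arbitrary $(s,t)$-cut in $H$, showing it equals the stated expression. Since the correspondence between $(s,t)$-cuts and points of $\{0,1\}^{|Y|}$ (via $x_u=1$ iff $u$ lies on the $s$-side) is an explicit bijection, both directions of the ``if and only if'' will follow at once from such a computation.

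First, I would split the weight of the cut determined by $x$ into three contributions: the edges within $Y$, the edges incident with $s$, and the edges incident with $t$. The $s$--$Y$ contribution is $\sum_{v\in Y} w_{s,v}(1-x_v)$, since an edge $\{s,v\}$ is cut exactly when $v$ lies on the $t$-side. Likewise the $t$--$Y$ contribution is $\sum_{u\in Y} w_{u,t}\,x_u$. For the $Y$-internal edges, the edge $\{u,v\}$ is cut iff $x_u+x_v-2x_ux_v=1$, so using the symmetry $w_{u,v}=w_{v,u}$ and $w_{u,u}=0$, the total weight equals
$$
\sum_{u\in Y}\Bigl(\sum_{v\in Y} w_{u,v}\Bigr)x_u - \sum_{u\in Y}\sum_{v\in Y} w_{u,v}\,x_u x_v.
$$
The first sum is $-\sum_{u\in Y} r_u\,x_u$ by definition of $r_u$, and the second sum is exactly $-q(x)/(-1) \cdot (-1)$; more precisely, since $w_{u,v}=-q_{u,v}/2$, it equals $-\sum_{u,v}w_{u,v}x_ux_v = q(x)$ after using the definition $q(x)=\tfrac{1}{2}\sum_{u,v}q_{u,v}x_ux_v$ together with the symmetry of $q_{u,v}$.

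Summing the three contributions, the total cut weight becomes
$$
\sum_{v\in Y} w_{s,v} + q(x) + \sum_{u\in Y}\bigl(w_{u,t}-w_{s,u}-r_u\bigr)x_u.
$$
It therefore suffices to verify the identity $w_{u,t}-w_{s,u} = r_u+\ell_u$ for every $u\in Y$, so that the coefficient of $x_u$ matches $\ell_u$ and the whole expression collapses to $\sum_{v\in Y}w_{s,v}+\ell(x)+q(x)$. This identity is immediate from the definitions of $w_{s,u}$ and $w_{u,t}$ by splitting into the two cases $r_u+\ell_u\geq 0$ and $r_u+\ell_u<0$: in each case, exactly one of $w_{s,u}$ and $w_{u,t}$ is zero while the other equals $|r_u+\ell_u|$ with the correct sign.

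I do not expect any real obstacle in this argument; it is the standard Picard--Ratliff reduction specialized to our data. The only points requiring minor care are (i) correctly converting the symmetric double sum $\sum_{u\neq v}w_{u,v}(x_u+x_v)$ into a single sum $2\sum_u x_u\sum_{v\neq u}w_{u,v}$ before identifying it with $-2\sum_u r_u x_u$, and (ii) checking that both branches of the $\max$ in the definitions of $w_{s,u}$ and $w_{u,t}$ produce the same linear expression $r_u+\ell_u$ for the difference $w_{u,t}-w_{s,u}$, which legitimizes the use of these non-negative weights (a necessary condition for the min-cut formulation to make algorithmic sense).
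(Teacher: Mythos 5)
Your proof is correct and follows essentially the same route as the paper's: express the cut value as the sum of contributions from the $s$--$Y$, $Y$--$Y$, and $Y$--$t$ edges, rewrite the $Y$-internal double sum via symmetry of $w_{u,v}$ and the identity $w_{u,v}=-q_{u,v}/2$, and finish by observing that $w_{u,t}-w_{s,u}=\max\{0,r_u+\ell_u\}-\max\{0,-(r_u+\ell_u)\}=r_u+\ell_u$. The only cosmetic difference is the order in which you collect terms (you isolate the coefficient of $x_u$ at the end rather than substituting the $\max$ identity mid-computation), and the stray fragment ``$-q(x)/(-1)\cdot(-1)$'' should simply be deleted since the sentence that follows already states the correct computation.
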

\begin{proof}
Consider an $(s,t)$-cut of value $w$ in $H$ and the point $x$ from $\{0,1\}^{|Y|}$ that encodes it. The value $w$ of the cut can be rewritten as
$$
w=\sum_{v\in Y} w_{s,v} (1-x_v) + \sum_{u\in Y}\sum_{v\in Y} w_{u,v} x_u(1-x_v)+\sum_{u\in Y} w_{u,t}x_u\mbox{.}
$$

By the above expression of $w_{s,v}$,
$$
\sum_{v\in Y} w_{s,v} (1-x_v)=\sum_{v\in Y}w_{s,v}-\sum_{v\in Y}\max \{0, -r_v-\ell_v\} x_v
$$
and by the expression of $w_{u,t}$,
$$
\sum_{u\in Y} w_{u,t}x_u=\sum_{u\in Y}\max\{0, r_u + \ell_u\}x_u\mbox{.}
$$

Since $\max\{0, r_u + \ell_u\}-\max \{0, -r_v-\ell_v\}=r_v+\ell_v$,
\begin{equation}\label{CPV2.sec.3.eq.1}
w=\sum_{v\in Y}w_{s,v}+\sum_{u\in{Y}}(r_u+\ell_u) x_u+\sum_{u\in Y}\sum_{v\in Y} w_{u,v} x_u(1-x_v)\mbox{.}
\end{equation}

Now observe that 
$$
\begin{array}{l}
\displaystyle\sum_{u\in Y}\sum_{v\in Y} w_{u,v} x_u(1-x_v)=\sum_{u\in Y}\left(\sum_{v\in Y}w_{u,v}\right)\!x_u-\sum_{u\in Y}\sum_{v\in Y} w_{u,v} x_ux_v\\[1.5\bigskipamount]
\hfill\displaystyle=-\sum_{u\in Y}r_ux_u+\frac{1}{2}\sum_{u\in Y}\sum_{v\in Y} q_{u,v} x_ux_v\mbox{.}\\
\end{array}
$$

As a consequence, (\ref{CPV2.sec.3.eq.1}) can be rewritten into
$$
w=\sum_{v\in Y}w_{s,v}+\ell(x)+q(x)
$$
and the desired result follows.
\end{proof}

Using state-of-the-art algorithms for dense instances of maximum flow~\cite{vandenBrandLeeLiuSaranurakSidfordSongWang2021}, we obtain that the problem can be solved in time $O(n^{2+o(1)})$. In fact, since the number of variables only depends on the number $q$ of vertices in $Q$, we can further refine the result as follows

\begin{thm}
Let $G$ be a complete split graph with $p+q$ vertices, $p$ of whose induce a clique in $G$ and the other $q$ form an independent set of $G$.
The rotation distance between two brooms on $G$ can be computed in time $O(p+q^{2+o(1)})$.
\end{thm}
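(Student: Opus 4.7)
The plan is to apply the reduction from the preceding lemma and then invoke an almost-linear-time maximum flow routine on the resulting graph $H$, while being careful to assemble $H$ itself in only $O(p+q^2)$ preprocessing time. By construction, $H$ has $|Y|+2\leq q+2$ vertices and $O(q^2)$ edges (it is a complete graph on $Y$ together with $s$ and $t$, each joined to every vertex of $Y$), so feeding $H$ into the maximum flow algorithm of \cite{vandenBrandLeeLiuSaranurakSidfordSongWang2021} returns a minimum $(s,t)$-cut of $H$, and hence $\min_x \ell(x)+q(x)$, in $q^{2+o(1)}$ time. Adding back the constant $c+\sum_{v\in Y} w_{s,v}$ that was subtracted off in the quadratic model then recovers the rotation distance.

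What remains is to set up the weighted graph $H$ and the constant $c$ in $O(p+q^2)$ time. Reading $T_1$ and $T_2$, identifying $Y$, and tagging every vertex along either handle with both its distance from the root and the number of $P$-vertices lying above it along that handle all take $O(p+q)$ time via two linear scans. From these prefix statistics I immediately read off $|A_u|$ for each $u\in Q\mathord{\setminus}Y$, $|D_u|$ for each $u\in Y$ (as the disjoint sum of the contributions of $T_1$ and $T_2$), and the number $\alpha_i(u)$ of $P$-vertices above $u\in Y$ inside the handle of $T_i$. The sets $E_u$ and $F_u$, together with all pairwise incidences between elements of $Y$ needed for the weights $w_{u,v}$, $w_{s,v}$, $w_{u,t}$, depend only on the relative order of the vertices of $Y$ inside the two handles, so they can all be tabulated in $O(q^2)$ time by a direct comparison of positions.

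The one nontrivial ingredient is computing, for every $u\in Y$, the number $a_u$ of $P$-vertices that lie above $u$ in both handles simultaneously: once $a_u$ is known, $|B_u|=\alpha_1(u)+\alpha_2(u)-2a_u$ and $|C_u|=p-\alpha_1(u)-\alpha_2(u)+a_u$ follow in constant time. This reduces to a planar dominance counting problem with $p$ points, one for each vertex of $P$, whose two coordinates are its handle positions in $T_1$ and $T_2$, and with $|Y|\leq q$ rectangular queries; this can be solved in $O(p+q)$ time on the word RAM by radix-sorting the points and sweeping with a Fenwick-style structure. The inversion count $\mathrm{inv}(\sigma)$ contributing to $c$ is obtained by the same technology in $O(p)$ time. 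Summing the $O(p+q^2)$ preprocessing with the $q^{2+o(1)}$ maximum-flow cost delivers the claimed bound.

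The main obstacle is precisely this preprocessing step: a naive tabulation of $|B_u|$ or $|C_u|$ over $u\in Y$ would cost $\Theta(pq)$, which is too large to be absorbed in $O(p+q^{2+o(1)})$ when $p$ is significantly larger than $q^2$. Replacing these naive counts by a single offline dominance-counting sweep is what allows the preprocessing, and hence the entire running time, to split into the separate additive contributions of $p$ and $q$ stated in the theorem.
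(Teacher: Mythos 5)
Your proposal follows the paper's argument for this theorem: it reduces the minimization of $\ell(x)+q(x)$ over $\{0,1\}^{|Y|}$ to a minimum $(s,t)$-cut in the auxiliary graph $H$ from the preceding lemma, and then invokes a near-optimal maximum-flow routine, exploiting the fact that $H$ has only $|Y|+2\le q+2$ vertices and $O(q^2)$ edges. The paper's own justification is a single sentence and does not discuss how to assemble $H$ and the constant $c$ within the stated time budget, so your preprocessing analysis is a genuine and welcome addition. In particular you are right that a naive tabulation of $|B_u|$ and $|C_u|$ over $u\in Y$ would cost $\Theta(pq)$, which exceeds $O(p+q^{2+o(1)})$ when $p\gg q^2$, and that the remedy is to reduce to an offline dominance-counting problem on the pairs of handle positions of the $P$-vertices.

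Two of your subroutine bounds, however, are overclaimed. For the dominance count, a Fenwick-tree sweep costs $O((p+q)\log q)$, not $O(p+q)$; this is easy to repair, since bucketing each $P$-vertex by its pair of block indices (the at most $q+1$ gaps between consecutive $Y$-vertices along each handle) into a $(q+1)\times(q+1)$ counting table and taking two-dimensional prefix sums gives $O(p+q^2)$, which is all the theorem needs. The more serious point is the claim that $\mathrm{inv}(\sigma)$ can be computed in $O(p)$ time: no such algorithm is known, and the best word-RAM bound for counting inversions is $O(p\sqrt{\log p})$, due to Chan and Patrascu. Since $\mathrm{inv}(\sigma)$ is an additive part of the constant $c$ and must be added to the minimum-cut value to report the distance, your argument as written proves $O\bigl(p\sqrt{\log p}+q^{2+o(1)}\bigr)$ rather than $O\bigl(p+q^{2+o(1)}\bigr)$. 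To be fair, this gap is also present in the paper's terse treatment, which never addresses the cost of computing $c$ at all.
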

  
\noindent{\bf Acknowledgment.}
Most of this paper was written while the first author was a visiting professor at the computer science department (LIPN) of the Université Paris 13 in the Fall of 2021 and 2022. The authors wish to thank the referees for their careful reading and useful comments.

\bibliography{BroomDistance}
\bibliographystyle{ijmart}

\end{document}